\newcommandx{\change}[2][1=]{\todo[linecolor=blue,backgroundcolor=blue!25,bordercolor=blue,#1]{#2}}
\newcommand{\qq}{\mathbb{Q}}
\newcommand{\bbT}{\mathbb{T}}
\newcommand{\rr}{\mathbb{R}}
\newcommand{\nn}{\mathbb{N}}
\newcommand{\zz}{\mathbb{Z}}
\newcommand{\la}{\langle}
\newcommand{\ra}{\rangle}
\newcommand{\sM}{\mathscr{M}}
\newsavebox\myboxA
\newsavebox\myboxB
\newlength\mylenA
\newcommand*\xbar[2][0.75]{%
    \sbox{\myboxA}{$\m@th#2$}%
    \setbox\myboxB\null
    \ht\myboxB=\ht\myboxA%
    \dp\myboxB=\dp\myboxA%
    \wd\myboxB=#1\wd\myboxA
    \sbox\myboxB{$\m@th\overline{\copy\myboxB}$}
    \setlength\mylenA{\the\wd\myboxA}
    \addtolength\mylenA{-\the\wd\myboxB}%
    \ifdim\wd\myboxB<\wd\myboxA%
       \rlap{\hskip 0.5\mylenA\usebox\myboxB}{\usebox\myboxA}%
    \else
        \hskip -0.5\mylenA\rlap{\usebox\myboxA}{\hskip 0.5\mylenA\usebox\myboxB}%
    \fi}
\newtheorem{thm}{Theorem}[section]
\newtheorem{lem}[thm]{Lemma}
\newtheorem*{thm*}{Theorem}
\newtheorem*{conj*}{Conjecture}
\newtheorem*{prop*}{Proposition}
\newtheorem*{fact*}{Fact}
\newtheorem{prop}[thm]{Proposition}
\newtheorem{cor}[thm]{Corollary}
\newtheorem*{eg*}{Example}
\definecolor{red}{rgb}{1.0, 0, 0}
\begin{document}
\sloppy


\title[ A family of dp-minimal expansions of $(\zz; +)$]{A family of dp-minimal expansions of $(\zz;+)$}
\thanks{This is a preprint. Later versions might contain significant changes.  Comments
are  welcome!}
\author{Minh Chieu Tran, Erik Walsberg}
\address{Department of Mathematics, University of Illinois at Urbana-
Champaign, Urbana, IL 61801, U.S.A}
\curraddr{}
\email{mctran2@illinois.edu, erikw@illinois.edu}
\subjclass[2010]{Primary 03C65; Secondary 03B25, 03C10, 03C64}
\date{\today}

\begin{abstract}
We show that the cyclically ordered-abelian groups expanding $(\mathbb{Z};+)$ contain a continuum-size family of dp-minimal structures such that no two members define the same subsets of $\zz$.
\end{abstract}

\maketitle
\section{Introduction}

\noindent In this paper, we are concerned with the following classification-type question:
\begin{center}
\textit{What are the dp-minimal expansions of $(\mathbb{Z};+)$?}
\end{center}
For a definition of dp-minimality, see \cite[Chapter 4]{simon-book}. The terms expansion and reduct here are as used in the sense of definability: If $\sM_1$ and $\sM_2$ are structures with underlying set $M$ and every $\sM_1$-definable set is also definable in $\sM_2$, we say that $\sM_1$ is a {\it reduct} of $\sM_2$ and that $\sM_2$ is an {\it expansion} of $\sM_1$. Two structures are {\it definably equivalent} if each is a reduct of the other.

\medskip \noindent A very remarkable common feature of the known dp-minimal expansions of $(\mathbb{Z};+)$ is their ``rigidity''. In \cite{CoPi}, it is shown that all proper stable expansions of $(\mathbb{Z};+)$ have infinite weight, hence infinite dp-rank, and so in particular are not dp-minimal.
The expansion $(\mathbb{Z};+,<)$, well-known to be dp-minimal, does not have any proper dp-minimal expansion \cite[6.6]{toomanyI}, or any proper expansion of finite dp-rank, or even any proper strong expansion \cite[2.20]{DoGo}. Moreover, any reduct $(\mathbb{Z};+,<)$ expanding $(\zz;+)$ is definably equivalent to $(\zz;+)$ or $(\zz;+,<)$ \cite{conant}. Recently, it is shown in  \cite[1.2]{AldE} that $(\mathbb{Z}; +, \prec_p)$ is dp-minimal for all primes $p$ where  $\prec_p$ be the partial order on $\mathbb{Z}$ given by declaring $k \prec_p l$ if and only if $v_p(k) < v_p(l)$ with $v_p$ the $p$-adic valuation on $\mathbb{Z}$. Also, any reduct of $(\mathbb{Z}; +, \prec_p)$ expanding $(\zz;+)$ is definably equivalent to either $(\mathbb{Z};+)$ or $(\zz;+,\prec_p)$ \cite[1.13]{AldE}.

\medskip \noindent The above ``rigidity'' gives hope for a classification of dp-minimal expansions of $(\mathbb{Z};+)$ analogous to that of dp-minimal fields \cite{Johnson}. 
In  \cite[5.32]{toomanyII}, the authors asked whether every dp-minimal expansion of $(\mathbb{Z};+)$ is a reduct of $(\mathbb{Z};+,<)$. In view of  \cite[1.2]{AldE}, the natural modified question is whether every dp-minimal expansion of $(\mathbb{Z};+)$ is a reduct of $(\mathbb{Z};+,<)$ or $(\mathbb{Z}; +, \prec_p)$ for some prime $p$.

\medskip \noindent In this paper,  we give a strong negative answer to the above question. We introduce cyclically ordered-abelian groups expanding $(\mathbb{Z};+)$ in Section 2, show that these are all dp-minimal, and all except two are not reducts of known examples. In Section 3, we characterize unary definable sets in these expansions of $(\mathbb{Z};+)$, classify these structures up to definable equivalence, and show that there are continuumm many up to definable equivalence. The proof of many of the above results notably makes use of Kronecker's approximation theorem.


\subsection*{Notations and conventions} Throughout, $j$, $k$, and $l$ range over the set $\zz$ of integers, $m$ and $n$ range over the set $\nn$ of natural numbers (which includes 0). The operation $+$ on $\nn$, $\zz$, $\zz^2$, $\qq$, and $\rr$ are assumed to be the standard ones and likewise for $\times$ and the ordering $<$ on all the above except $\zz^2$. If $(M; <)$ is a linear order and $a, b \in M$, set $[a, b)_M = \{ t \in M : a \leq t <b\}$, likewise for other intervals.

\section{Cyclically ordered abelian groups expanding $(\zz; +)$}
\noindent We begin with some general facts on cyclically ordered groups.
There are many sources for this material, \cite[8.1]{Gu-thesis} is one.
A \textbf{cyclic order} on a set $G$ is a ternary relation $C \subseteq G^3$ such that for all $a,b,c \in G$, the following holds:
\begin{enumerate}
\item if $(a,b,c) \in C$, then $(b,c,a) \in C$;
\item if $(a,b,c) \in C$, then $(c,b,a) \notin C$;
\item if $(a,b,c) \in C$ and $(a,c,d) \in C$ then $(a,b,d) \in C$;
\item if $a,b,c$ are distinct, then either $(a,b,c) \in C$ or $(c,b,a) \in C$.
\end{enumerate}
We will often write $C(a, b, c)$ instead of $(a, b, c) \in C$.
It is easy to see that the binary relation $<$ on $G$ given by declaring $a < b$ if either $C(0,a,b)$ or $a = 0$ and $b \neq 0$ is a linear order, which we will refer to as the linear order on $G$ {\bf associated} to $C$.

\medskip \noindent A cyclic order $C$ on the underlying set of an abelian group  $(G;+)$ is \textbf{additive} if it is preserved under the group operation. In this case, we call the combined structure  $(G; +, C)$ a \textbf{cyclically ordered abelian group}. 

\medskip \noindent Let $(G; +)$ be an abelian group. Suppose $(H; +,<)$ is a linearly ordered abelian group, $u$ is an element in $H^{>0}$ such that $( nu)_{n>0}$ is cofinal in $(H; <)$, and $\pi: H \to G$ induces an isomorphism from $(H \slash \la u \ra; +)$ to $(G; +)$. Define the relation $C$ on $G$ by:  
$$ C(\pi(a), \pi(b), \pi(c)) \ \text{ if }\  a<b<c \text{ or } b< c< a \text{ or } c< a < b \quad \text{ for }a, b, c \in [0,u)_H.$$
We can easily check that $C$ is an additive cyclic ordering on  $(G;+)$. We call $(H; u, +,<)$ as above a {\bf universal cover} of $(G;+,C)$ and $\pi$ a {\bf covering  map}. 
It turns out that all cyclically ordered abelian groups can be obtained in this fashion:
\begin{lem}\label{prop:cover}
Suppose $(G; +, C)$ is a cyclically ordered abelian group. Then $(G; +, C)$  has a universal cover  $(H; u, +,<)$   which is unique up to unique isomorphism.  Moreover, $(G; +, C)$ is isomorphic to $([0, u)_H; \tilde{+},\tilde{C})$ where $\tilde{+}$ and $\tilde{C}$ are definable in $(H; u, +,<)$. 
\end{lem}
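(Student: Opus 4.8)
The plan is to prove existence by an explicit construction and uniqueness by the method of canonical coset representatives; the ``moreover'' clause will then be mostly bookkeeping, and the isomorphism in it should be interpreted as required to commute with the covering maps.

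For existence I would let $<$ be the linear order on $G$ associated to $C$ (so $0$ is $<$-least) and define the ``carry'' map $\kappa\colon G\times G\to\{0,1\}$ by $\kappa(a,b)=1$ iff $a+b<a$. Using axioms (1)--(4) one checks that $\kappa$ is symmetric and satisfies the cocycle identity $\kappa(a,b)+\kappa(a+b,c)=\kappa(b,c)+\kappa(a,b+c)$. Then I would put $H:=G\times\zz$ with the twisted addition $(a,m)+(b,n):=(a+b,\,m+n+\kappa(a,b))$, the anti-lexicographic order $(a,m)<(b,n)$ iff $m<n$, or $m=n$ and $a<b$, and $u:=(0,1)$, with $\pi\colon H\to G$ the first projection. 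Commutativity and associativity of $+$ on $H$ follow from the two properties of $\kappa$, with identity $(0,0)$ and obvious inverses; translation-invariance of $<$ is a short case analysis whose only delicate point (when two carry values collide) is settled by a cyclic-order argument; cofinality of $(nu)_{n>0}$ is clear since $(a,m)<(0,m+1)$; and $\ker\pi=\langle u\rangle=\{(0,n):n\in\zz\}$, so $\pi$ induces $(H/\langle u\rangle;+)\cong(G;+)$. Finally $[0,u)_H=\{(a,0):a\in G\}$ is mapped by $\pi$ order-isomorphically onto $(G;<)$, so the cyclic order the recipe preceding the lemma reads off this cover is exactly $C$ --- this is the standard fact that a cyclic order is recovered from its associated linear order by the ``occurs in this cyclic pattern'' rule, proved from (1)--(4). (All of this is classical; compare \cite[8.1]{Gu-thesis}.)

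For uniqueness I would first record the sublemma that for any universal cover $(H;u,+,<)$ with covering map $\pi$, each $\langle u\rangle$-coset meets $[0,u)_H$ in exactly one point: existence because $(nu)_{n>0}$ is cofinal and $(nu)_{n<0}$ coinitial, uniqueness because two points of $[0,u)_H$ differing by a nonzero multiple of $u$ would force one of them to be $\geq u$; hence $\pi\res[0,u)_H$ is a bijection onto $G$, and by the last paragraph it is an isomorphism of linear orders onto $(G;<)$. Given two universal covers $(H_1;u_1,+,<)$, $(H_2;u_2,+,<)$ with covering maps $\pi_1,\pi_2$, I would write each $h\in H_1$ uniquely as $h=b+nu_1$ with $b\in[0,u_1)_{H_1}$ and set $\phi(h):=b'+nu_2$, where $b'\in[0,u_2)_{H_2}$ is the unique representative with $\pi_2(b')=\pi_1(b)$. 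Then $\phi(u_1)=u_2$ and $\pi_2\circ\phi=\pi_1$ by construction; that $\phi$ is a homomorphism reduces to the observation that, for $b,b''\in[0,u_1)_{H_1}$, one has $b+b''<u_1$ iff $\pi_1(b)+\pi_1(b'')\geq\pi_1(b)$ holds in $(G;+,<)$ --- a condition intrinsic to $(G;+,<)$, hence synchronized between $H_1$ and $H_2$; that $\phi$ preserves $<$ is a two-line check on the anti-lexicographic form; and $\phi$ is bijective since $\pi_1,\pi_2$ match $[0,u_1)_{H_1}$ and $[0,u_2)_{H_2}$ bijectively with $G$. For uniqueness of $\phi$: any cover isomorphism $\psi$ has $\psi(b+nu_1)=\psi(b)+nu_2$, maps $[0,u_1)_{H_1}$ onto $[0,u_2)_{H_2}$ (it fixes $0$ and $u$ and preserves $<$), and satisfies $\pi_2(\psi(b))=\pi_1(b)$, so by uniqueness of representatives $\psi(b)=b'$ and $\psi=\phi$.

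For the ``moreover'' clause I would define on $[0,u)_H$ the operation $a\,\tilde+\,b:=$ the unique element of $[0,u)_H$ congruent to $a+b$ modulo $\langle u\rangle$ --- definably, $c=a\,\tilde+\,b$ iff $c\in[0,u)\wedge(c=a+b\vee c=a+b-u)$ --- and $\tilde C(a,b,c)$ iff $(a<b<c)\vee(b<c<a)\vee(c<a<b)$; both are definable in $(H;u,+,<)$. Then $\pi\res[0,u)_H$ is a bijection onto $G$ carrying $\tilde+$ to $+$ (since $\pi$ is a homomorphism with $\pi(u)=0$) and $\tilde C$ to $C$ (by the recovery fact), hence an isomorphism of cyclically ordered abelian groups. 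I expect the main obstacle to be the existence half --- checking that the carry-twisted $(H;+)$ really is a group and that $<$ is translation-invariant, both of which unwind to somewhat delicate manipulations with the cyclic-order axioms --- while everything in the uniqueness and ``moreover'' parts is routine once the single-representative sublemma and the intrinsic characterization of ``$b+b''<u$'' are in hand.
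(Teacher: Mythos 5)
Your construction is essentially the paper's: your carry map $\kappa(a,b)$ reproduces exactly the paper's case-split addition on $\zz \times G$ (with the two coordinates swapped), together with the same lexicographic order, the same $u$, and the same $\tilde{+}$ and $\tilde{C}$ on $[0,u)_H$. The paper dismisses both the verification of the cover axioms and the uniqueness claim as ``easily checked,'' so your more detailed treatment --- the cocycle identity for $\kappa$, the single-representative sublemma, and the (necessary) observation that ``unique up to unique isomorphism'' must refer to isomorphisms commuting with the covering maps, since e.g.\ the lexicographic cover has nontrivial automorphisms fixing $u$ --- only supplies detail rather than changing the route.
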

\begin{proof}
We set $H = \zz \times G$ and for $(k, a)$ and $(k', a')$ in $H$, define 
$$(k, a)+ (k', a') = 
\begin{cases}
(k+k', a+a' ) & \text{ if } a =0  \text{ or } a' = 0 \text{ or } C(0_G, a, a +a') ,\\
(k+k'+1, a+a' ) & \text{ otherwise}.
\end{cases}
 $$
We let $<$ be the lexicographic product of the usual order on $\zz$ and the linear order on $G$ associated to $C$. Set $0_H = (0_\zz, 0_G)$ and $u = (1_\zz, 0_G)$. We can easily check that $(H;u,+,<)$ is a universal cover of $G$ and that every universal cover of $(G; +, C)$  is isomorphic to $(H; u, +,<)$.
For $a,a' \in [0_H,u)_H$, we set
$$ a \tilde{+} a'=
\begin{cases}
a + a' & \text{ if } a + a' \in [0_H,u)_H ,\\
a + a' - u & \text{ otherwise}.
\end{cases}
$$
We define $\tilde{C}$ by setting $\tilde{C}(a,b,c)$ for any $a,b,c \in [0,u)_H$ such that $a < b < c$ or $b < c < a$ or $c < a < b$.
It is easy to see the quotient map $H \to G$ induces an isomorphism $([0_H,u)_H,\tilde{+}, \tilde{C}) \to (G,+,C)$.
\end{proof}

\noindent Lemma~\ref{prop:cover} gives us a correspondence  between additive cyclic orderings on $\zz$ and  additive linear orderings on $\zz^2$:
\begin{prop} \label{prop: cyclicvslinear}
Let $(\zz; +, C)$ be a cyclically ordered group. Then there is a linear order $<$ on $\zz^2$ such that a universal cover of $(\zz; +, C)$ is $(\zz^2; u, +, <)$ with $u =(1,0)$.
\end{prop}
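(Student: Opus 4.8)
The plan is to apply Lemma~\ref{prop:cover} with $G = \zz$ and then reconcile the universal cover it produces with the \emph{standard} group structure on $\zz^2$. Running the construction in the proof of Lemma~\ref{prop:cover} for $G = \zz$ gives a universal cover $(H; u, \oplus, \prec)$ of $(\zz;+,C)$ whose underlying set is $\zz\times\zz$ and whose distinguished element is $u = (1,0)$, but whose group law is the twisted one $(k,a)\oplus(k',a') = (k+k'+\varepsilon,\, a+a')$ with $\varepsilon\in\{0,1\}$ determined by $C$, and whose order $\prec$ is the lexicographic product of the usual order on $\zz$ with the order on $\zz$ associated to $C$. The task is to replace $\oplus$ by the componentwise addition $+$ on $\zz^2$ without changing the isomorphism type of the structure, and then read off the transported order.

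First I would check that $(H;\oplus)$ is isomorphic, as an abelian group, to $(\zz^2;+)$ via an isomorphism carrying $u$ to $(1,0)$. Since $(H;\oplus,\prec)$ is a linearly ordered abelian group it is torsion-free, so $u$ --- being $\prec$-positive, hence nonzero --- generates an infinite cyclic subgroup $\la u\ra$, and by construction $\pi$ induces an isomorphism $H/\la u\ra \cong (\zz;+)$. Thus there is a short exact sequence $0 \to \zz \to (H;\oplus) \to \zz \to 0$, which splits because $\zz$ is free; a splitting yields an isomorphism $(H;\oplus)\xrightarrow{\ \sim\ }\zz\times\zz$ taking $\la u\ra$ onto the first factor, and after composing with $(a,b)\mapsto(-a,b)$ if necessary we obtain such an isomorphism $\psi$ with $\psi(u) = (1,0)$. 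Then I would transport the order along $\psi$: define the linear order $<$ on $\zz^2$ by $x < y \iff \psi^{-1}(x)\prec\psi^{-1}(y)$, so that $\psi$ becomes an isomorphism of structures $(H;u,\oplus,\prec)\xrightarrow{\ \sim\ }(\zz^2;(1,0),+,<)$.

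Finally I would observe that the property of being a universal cover of $(\zz;+,C)$ is invariant under such isomorphisms: if $\pi\colon H\to\zz$ is a covering map for $(H;u,\oplus,\prec)$, then $\pi\circ\psi^{-1}$ exhibits $(\zz^2;(1,0),+,<)$ as a universal cover --- it is a linearly ordered abelian group, $(n\cdot(1,0))_{n>0} = ((n,0))_{n>0}$ is cofinal in $(\zz^2;<)$, and $\pi\circ\psi^{-1}$ induces the required isomorphism of cyclically ordered abelian groups. This produces the desired linear order $<$ on $\zz^2$. I do not expect a genuine obstacle here; the only step requiring a little care is the splitting, and its content is merely that any universal cover of $(\zz;+,C)$ is a free abelian group of rank $2$ in which $u$ spans a direct summand, so that re-presenting it on $(\zz^2;+)$ with $u=(1,0)$ loses nothing.
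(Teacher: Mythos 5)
Your proposal is correct and follows essentially the same route as the paper: the paper likewise notes that $\la u \ra \cong \zz$ and $H/\la u \ra \cong \zz$, identifies $(H;+)$ with $(\zz^2;+)$ (your splitting of the short exact sequence is exactly the content of choosing $v$ mapping to $1$ so that $\la u,v\ra = \zz^2$), normalizes $u=(1,0)$ by a change of basis, and transports the order. Your version just makes the splitting and the transport of structure more explicit.
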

\begin{proof}
Suppose $(\zz; +, C)$ is as above and $(H; u, +, <)$ is its universal cover. Then $(\zz; +)$ is $(H\slash \la u \ra; +)$. Using also the fact that $(\la u \ra; +)$ is isomorphic to $(\zz; +)$, we arrange that $(H; +)$ is $(\zz^2; +)$. Choose $v \in \zz^2$ such that $v$ is mapped to $1$ in $\zz$ under the quotient map. Then $\la u, v\ra = \zz^2$, and so by a change of basis we can arrange that $u =(1, 0)$.
\end{proof}

\noindent The dp-minimality of the  cyclically ordered groups $( \zz; +, C)$  can be established rather quickly using a criterion in \cite{JaSiWa}:

\begin{thm} \label{Thm: dp-minimality}
Every cyclically ordered group $( \zz; +, C)$ is dp-minimal.
\end{thm}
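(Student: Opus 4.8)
The plan is to realise $(\zz;+,C)$ inside its universal cover and then apply a general dp-minimality criterion. By Proposition~\ref{prop: cyclicvslinear} we may fix a universal cover of the form $(\zz^2;u,+,<)$ with $u=(1,0)$, and by Lemma~\ref{prop:cover} the structure $(\zz;+,C)$ is isomorphic to $([0,u)_{\zz^2};\tilde{+},\tilde{C})$, whose domain, operation and ternary relation are all definable in the ordered abelian group $(\zz^2;+,<)$ (no quotient is needed, only a definable set with definable structure). Since ordered abelian groups are NIP (Gurevich--Schmitt), and NIP is inherited by reducts and by structures induced on definable subsets, it follows at once that $(\zz;+,C)$ is NIP, and moreover that its dp-rank is at most that of $(\zz^2;+,<)$.

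That last bound does not by itself give the theorem: $(\zz^2;+,<)$ need not be dp-minimal (for instance $\zz^2/p\zz^2$ has $\ff_p$-dimension two), so one cannot simply pass to the cover. This is exactly the point at which the criterion of~\cite{JaSiWa} is meant to enter: it should upgrade ``NIP'' to ``dp-minimal'' for structures of the present kind once a smallness condition is verified, and for $(\zz;+,C)$ that condition ought to reduce to the fact that the underlying group is $\zz$, so that each quotient $\zz/p\zz$ has size exactly $p$ — the smallest possible — and no extra independence can be coded. Concretely I would run through the hypotheses of that criterion against the presentation $([0,u)_{\zz^2};\tilde{+},\tilde{C})$: the linear order underlying $C$ together with $\tilde{+}$ supplies the ``ordered group''-like data the criterion asks for, while the triviality of the $p$-torsion structure of $\zz$ supplies the smallness needed to close the gap to dp-minimality.

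The step I expect to be the real obstacle is matching the hypotheses of~\cite{JaSiWa} to the cyclically ordered group: one must check that the cyclic (rather than linear) nature of $C$, and the fact that one works on the fundamental domain $[0,u)_{\zz^2}$ rather than on all of $\zz^2$, cause no difficulty. If the cited criterion is instead phrased combinatorially — in terms of mutually indiscernible sequences and a single external element — the work shifts to controlling such configurations for $C$: one would pass to a sufficiently saturated elementary extension, represent it again as a fundamental domain inside a monster ordered abelian group, and argue that an interval in an ordered abelian group carries only the ``expected'' amount of independence, the decisive input once more being that $\zz$ has one-dimensional $p$-quotients for every prime $p$.
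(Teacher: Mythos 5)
Your first paragraph matches the paper's strategy exactly: present $(\zz;+,C)$ as $([0,u)_{\zz^2};\tilde{+},\tilde{C})$, a structure definable in the linearly ordered abelian group $(\zz^2;+,<)$ by Lemma~\ref{prop:cover} and Proposition~\ref{prop: cyclicvslinear}, and use the fact that dp-rank does not increase when passing to reducts or to the induced structure on a definable set. The problem is your second step. You assert that ``$(\zz^2;+,<)$ need not be dp-minimal (for instance $\zz^2/p\zz^2$ has $\ff_p$-dimension two),'' and on that basis you abandon the reduction to the cover and instead speculate about applying the criterion of~\cite{JaSiWa} directly to the cyclically ordered group, with the ``decisive input'' being that $\zz$ has one-dimensional $p$-quotients. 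This is a misreading of the criterion. Proposition~5.1 of~\cite{JaSiWa} states that an ordered abelian group is dp-minimal if (and only if) $G/pG$ is \emph{finite} for every prime $p$; it does not require $|G/pG|=p$. Since $|\zz^2/n\zz^2|=n^2<\infty$, every linearly ordered group expanding $(\zz^2;+)$ \emph{is} dp-minimal, and the reduction you set up in your first paragraph finishes the proof immediately. That is precisely the paper's argument.

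As written, your proposal does not close the gap you (incorrectly) identified: the criterion of~\cite{JaSiWa} is a statement about linearly ordered abelian groups, so it cannot be applied to $(\zz;+,C)$ or to the fundamental domain $[0,u)_{\zz^2}$ directly, and your second and third paragraphs only describe what such an application ``ought to'' look like without carrying it out. The fix is simply to check the hypothesis of the criterion against the cover $(\zz^2;+,<)$ rather than against $\zz$: finiteness of $\zz^2/p\zz^2$ is all that is needed, and the one-dimensionality of $\zz/p\zz$ plays no role.
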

\begin{proof}
By the last statement of Lemma \ref{prop:cover} and Proposition \ref{prop: cyclicvslinear}, it suffices to check that every linearly ordered group $( \zz^2; +, <)$ is dp-minimal. We have that $$ |  \zz^2 \slash n \zz^2|  = n^2 < \infty.$$ The conclusion of the theorem follows from the criterion for dp-minimality in \cite[Proposition 5.1]{JaSiWa}.
\end{proof}

\noindent  So far it is still possible that every cyclically ordered group $(\zz; +, <)$ is a reduct of a known dp-minimal expansion of $(\zz; +)$. Toward showing that this is not the case, we need a more explicit description of the additive cyclic orders on $(\zz; +)$. 

\medskip \noindent Define the cyclic ordering $C_{+}$ on $\zz$ by setting $C_+( j, k, l)$ if and only if $j < k < l$ or $l < j < k$ or $k < l < j$.
We define the opposite cyclic ordering $C_{-}$ on $\zz$ by setting $$C_{-}(j,k,l) \text{ if and only if } C_{+}(-j,-k,-l).$$ We observe that $C_+$ and $C_-$ are distinct, but $(\zz; +, C_+)$ and $(\zz; +, C_-)$ are isomorphic via the map $k \mapsto -k$ and both have $(\zz^2; +, <_\text{lex}) $ as a universal cover where $<_\text{lex}$ is the usual lexicographic ordering on $\zz^2$. It is easy to see that both $(\zz;+,C_+)$ and $(\zz;+,C_{-})$ are definably equivalent with $(\zz;+,<)$.

\medskip \noindent Let  $( \rr \slash \zz; +, C)$ be the cyclically ordered group with a universal cover $(\rr; 1, +, <)$ and such that $C(0 + \zz, 1/4 + \zz, 1/2 + \zz)$ holds. We call $( \rr \slash \zz; +, C)$ the {\bf positively oriented circle}. For $a, b \in \rr$ such that $a-b \notin \zz$, we set $[a, b)_{\rr \slash \zz}$ to be the set 
$$ \{ t \in \rr \slash \zz: t =a +\zz \text{ or }  C(a+\zz, t, b +\zz)  \}.  $$
Let $\alpha$ be in $\rr \setminus \qq$. Define the additive cyclic ordering $C_\alpha$ on $(\zz; +)$ by   setting
$$ C_\alpha(j, k, l)\  \text{ if and only if }\   C(\alpha j + \zz, \alpha k +\zz , \alpha l +\zz). $$
In other words, $C_\alpha$ is the pull-back of $C$ by the character $\chi_\alpha : \zz \to \rr \slash \zz, l \mapsto \alpha l +\zz$. As before, we observe that $C_\alpha$ and $C_{-\alpha}$ are distinct. However, $(\zz; +, C_\alpha)$ and $(\zz; +, C_{-\alpha})$ are isomorphic via the map $k \mapsto -k$ and both have  $(\zz^2;+,<_\alpha)$ as a universal cover with $<_\alpha$ the pull-back of the ordering $<$ on $\rr$ by the group embedding
$$\psi_\alpha: \zz^2 \to \rr, \quad (k, l) \mapsto k+ \alpha l.$$
We also note that $(\zz^2; +, <_\alpha)$ is not isomorphic to  $(\zz^2; +, <_\text{lex})$ as the former is archimedean and the latter is not. It follows that $(\zz; +, C_\alpha)$ is not isomorphic to $(\zz; +,C_\text{+})$ and $(\zz;+,C_{-})$.

\noindent The following result is essentialy the well-known classification of linearly ordered group expanding $(\zz^2; +)$ up to isomorphism:
\begin{lem} \label{lem: linearorderclassification}
Suppose $(\zz^2; +, <)$ is a linearly ordered group such that $(nu)_{n>0}$ is cofinal in $\zz^2$ with  $u = (1, 0)$. Then $(\zz^2; u, +, <)$ is isomorphic to either  $(\zz^2; u, +, <_\mathrm{lex})$ or $(\zz^2; u, +, <_\alpha)$ for a unique $\alpha \in [0, 1/2)_{\rr \setminus \qq}$.
\end{lem}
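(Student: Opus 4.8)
The plan is to distinguish two cases according to whether the linearly ordered group $(\zz^2;+,<)$ is archimedean, reducing each to a classical description. Two preliminary remarks: applying the cofinality hypothesis with $x=0$ gives $u>0$, and it also shows that $u$ lies in no proper convex subgroup, since a convex subgroup containing a cofinal set is all of $\zz^2$.

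In the non-archimedean case, $(\zz^2;+,<)$ has a proper nontrivial convex subgroup $C$. Convex subgroups of ordered abelian groups are pure, so in $\zz^2$ such a $C$ has the form $\zz w$ with $w$ primitive; since convex subgroups are totally ordered by inclusion while distinct rank-one direct summands of $\zz^2$ are incomparable, $C$ is the unique proper nontrivial convex subgroup, and $\zz^2/C\cong\zz$. I would then check that $\{u,w\}$ is a $\zz$-basis of $\zz^2$ — this is the step where the cofinality of $(nu)_{n>0}$ has to be used, through the induced ordering on $\zz^2/C$ — and, after replacing $w$ by $-w$ if necessary so that $w>0$, that $<$ agrees with the lexicographic order in the ordered basis $(u,w)$: convexity of $C$ forces the $u$-coordinate to be dominant, and $w$ is the least positive element of the ordered group $C\cong(\zz;<)$. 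Sending the $(u,w)$-coordinates of an element of $\zz^2$ to its standard coordinates is then a group isomorphism $(\zz^2;u,+,<)\to(\zz^2;u,+,<_{\mathrm{lex}})$ fixing $u=(1,0)$.

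In the archimedean case, Hölder's theorem provides an order-preserving group embedding $\phi\colon\zz^2\into(\rr;+,<)$, unique up to multiplication by a positive real; normalize $\phi(u)=1$ and set $\beta=\phi((0,1))$. Then $\phi=\psi_\beta$, so $<$ is the pullback of the order of $\rr$ along $\psi_\beta$, i.e.\ $(\zz^2;u,+,<)=(\zz^2;u,+,<_\beta)$, and $\beta\notin\qq$ since $\phi(\zz^2)$ has rank $2$. To normalize $\beta$, note that the group automorphisms of $\zz^2$ fixing $u=(1,0)$ are exactly the maps $(k,l)\mapsto(k+al,\varepsilon l)$ with $a\in\zz$ and $\varepsilon\in\{\pm1\}$, and precomposing $\psi_\beta$ with such a map gives $\psi_{a+\varepsilon\beta}$; hence $(\zz^2;u,+,<_\beta)\cong(\zz^2;u,+,<_{\beta'})$ whenever $\beta'\in(\beta+\zz)\cup(-\beta+\zz)$. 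As $\beta$ is irrational, this set meets $[0,1/2)$ in exactly one point $\alpha$, which is irrational, and this $\alpha$ yields the asserted isomorphism. For uniqueness, an isomorphism $(\zz^2;u,+,<_\alpha)\to(\zz^2;u,+,<_{\alpha'})$ is one of the above automorphisms, so $\psi_\alpha$ and $\psi_{a+\varepsilon\alpha'}$ induce the same order on $\zz^2$ and take the same value at $u$, hence coincide, so $\alpha=a+\varepsilon\alpha'$ — and two numbers of $[0,1/2)$ so related are equal. Finally, the two alternatives are disjoint because $(\zz^2;+,<_{\mathrm{lex}})$ is not archimedean (it has the proper nontrivial convex subgroup $\zz(0,1)$) while every $(\zz^2;+,<_\alpha)$ is.

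The archimedean half is essentially Hölder's theorem together with bookkeeping, so I expect the main difficulty to be in the non-archimedean half, namely verifying that $\{u,w\}$ is a $\zz$-basis of $\zz^2$. This is exactly what makes the lexicographic identification an isomorphism fixing the constant $u=(1,0)$, rather than merely an abstract order-isomorphism; pinning down how the cofinality hypothesis on $(nu)_{n>0}$ rules out $u$ being, modulo $C$, a proper multiple of a generator of $\zz^2/C$ is the point I would examine most carefully.
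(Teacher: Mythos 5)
Your archimedean case is correct, and it takes a genuinely different route from the paper: the paper never invokes H\"older's theorem, but instead extracts a second generator $v$ with $\la u,v\ra=\zz^2$ and $0<2v<u$ directly from the cofinality hypothesis and defines $\alpha$ as $\sup\{m/n: mu<nv\}$. Your normalization of $\alpha$ via the automorphisms $(k,l)\mapsto(k+al,\varepsilon l)$ fixing $u$, and the resulting uniqueness argument, are complete and correct.

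The non-archimedean case, however, fails at exactly the step you flagged: cofinality of $(nu)_{n>0}$ does \emph{not} imply that the image of $u$ generates $\zz^2/C$, so $\{u,w\}$ need not be a $\zz$-basis, and this gap cannot be repaired because the statement as written is false. Order $\zz^2$ by declaring $(k,l)>0$ iff $2k+l>0$, or $2k+l=0$ and $k>0$. This is a linear group order in which $u=(1,0)$ is positive and $(nu)_{n>0}$ is cofinal (the linear form $2k+l$ takes the value $2n$ at $nu$ and eventually dominates its value at any fixed point); its unique proper nontrivial convex subgroup is $C=\zz(1,-2)$, and the image of $u$ in $\zz^2/C\cong\zz$ is twice a generator. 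Hence there is no isomorphism onto $(\zz^2;u,+,<_{\mathrm{lex}})$ fixing $u$ (it would carry $C$ to $\{0\}\times\zz$ and induce an isomorphism of the quotient copies of $\zz$ sending $2$ to $1$), and no isomorphism onto any $(\zz^2;u,+,<_\alpha)$, since those are archimedean. The paper's own proof breaks on the same example: there one gets $v=(0,1)$, finds $(nv)_{n>0}$ cofinal, and computes $\sup\{m/n: mu<nv\}=1/2$, so the unjustified assertion that this supremum lies in $[0,1/2)_{\rr\setminus\qq}$ fails. Both your argument and the paper's do go through under the additional hypothesis that $u$ maps to a generator of the quotient by every proper convex subgroup; without it, the example above descends to an additive cyclic order on $\zz=\zz^2/\la u\ra$ (in which the even and odd integers form two complementary discretely ordered arcs) that is none of $C_+$, $C_-$, or $C_\alpha$, so the defect propagates to Proposition 2.5 as stated.
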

\begin{proof}
Suppose $(\zz^2; +, <)$ and $u$ are as stated above. Using the fact that $(nu)_{n>0}$ is cofinal in $\zz^2$, we obtain $k$ such that $ku < (0, 1) < (k+1)u$.  Let   $v$ be $(0, 1) -ku$ if $2ku < (0, 2) < (2k+1)u$ and let $u$ be $(k+1)u -(0,1)$ otherwise. Then 
$$ \la u, v \ra = \zz^2 \text{ and } 0< 2v<u .$$
If $(nv)_{n>0}$ is not cofinal in $\zz^2$, then it is easy to see that the map 
$$\zz^2 \to \zz^2, \quad ku+lv \mapsto (k, l)$$
is an ordered group isomorphism from $(\zz^2; u, +, <)$ to $(\zz^2; u, +, <_\mathrm{lex})$. 
Now suppose $(nv)_{n>0}$ is cofinal in $\zz^2$. Then set
 $$ \alpha = \sup\left\{ \frac{m}{n} : m, n> 0 \text{ and } mu< nv  \right\}. $$
It is easy to check that $\alpha \in [0, 1\slash2)_{\rr\setminus \qq}$ and that  the map 
$\zz^2 \to \zz^2, ku+lv \mapsto (k, l)$
is an isomorphism from $(\zz^2; u,  +, <)$ to $(\zz^2; u,  +, <_\alpha)$. 

Finally, suppose  $\alpha$ and $\beta$ are in $[0, 1/2)_{\rr\setminus \qq}$ and $f$ is an isomorphism from 
$(\zz^2; u, +, <_\alpha)$ to $(\zz^2; u, +, <_\beta)$ with $u =(1, 0)$. Let $v = (0, 1)$.
Then  
$$  \la u, f(v) \ra = \zz^2 \text{ and }  0< 2f(v) < u .$$
The former condition implies $f(v)$ is either $(k, 1)$ or $(k, -1)$ for some $k$. Combining with the latter condition, we get $f(v) = (0, 1)$, and so $f = \text{id}_{\zz^2}$. 
 It follows easily from the definition of $<_\alpha$ and $<_\beta$ that $\alpha =\beta$.
\end{proof}

\noindent 
 We deduce a classification of additive cyclic orders on $( \zz; +)$:
\begin{prop} \label{Prop: Cyclicorderclassification}
Every additive cyclic order on $\zz$ is either $C_{+}$, $C_{-}$, or $C_\alpha$ for some $\alpha \in \rr \setminus \qq$. Moreover, for $\alpha, \beta \in \rr \setminus \qq$, $C_\alpha = C_\beta$  if and only if $\alpha -\beta \in \zz$. 
\end{prop}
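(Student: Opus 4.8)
The plan is to run everything through the cover correspondence of Lemma~\ref{prop:cover} and Proposition~\ref{prop: cyclicvslinear} and then invoke the classification of linear orders on $\zz^2$ recorded in Lemma~\ref{lem: linearorderclassification}.

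For the first assertion, let $C$ be an additive cyclic order on $\zz$. By Proposition~\ref{prop: cyclicvslinear} it has a universal cover of the form $(\zz^2; u, +, <)$ with $u = (1,0)$, whose covering map is $(k,l)\mapsto l$, and for which $(nu)_{n>0}$ is cofinal by the definition of universal cover. Lemma~\ref{lem: linearorderclassification} then produces an ordered-group isomorphism $\phi$ from $(\zz^2; u, +, <)$ onto $(\zz^2; u, +, <')$ with $\phi(u) = u$, where $<'$ is $<_\mathrm{lex}$ or $<_\alpha$ for a unique $\alpha\in[0,1/2)_{\rr\setminus\qq}$. Since $\phi$ fixes $u$ it descends to an automorphism $\bar\phi$ of $\zz^2/\langle u\rangle\cong\zz$, so $\bar\phi = \pm\mathrm{id}$. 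The central bookkeeping step is to check that $\phi$ transports the cyclic order induced by $<$ to the one induced by $<'$, twisted by $\bar\phi$: since $\phi$ restricts to an order isomorphism $[0,u)_{<}\to[0,u)_{<'}$ and acts as $\bar\phi$ on second coordinates, it carries the unique representative of $a$ in $[0,u)_<$ to the unique representative of $\bar\phi(a)$ in $[0,u)_{<'}$, and reading off cyclic orderings in the two transversals yields $C(a,b,c)\iff C'(\bar\phi(a),\bar\phi(b),\bar\phi(c))$, where $C'$ is the cyclic order induced by $(\zz^2;u,+,<')$ with covering map $(k,l)\mapsto l$. As recorded before the statement of the proposition, this induced cyclic order is $C_+$ when $<' = <_\mathrm{lex}$ and $C_\alpha$ when $<' = <_\alpha$. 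Hence if $\bar\phi = \mathrm{id}$ then $C\in\{C_+, C_\alpha\}$, while if $\bar\phi = -\mathrm{id}$ then, using $C_-(j,k,l)\iff C_+(-j,-k,-l)$ and $C_{-\alpha}(j,k,l)\iff C_\alpha(-j,-k,-l)$, we get $C\in\{C_-, C_{-\alpha}\}$; in all cases $C$ is $C_+$, $C_-$, or $C_\gamma$ for some $\gamma\in\rr\setminus\qq$.

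For the ``moreover'' clause, the direction $\alpha - \beta\in\zz\Rightarrow C_\alpha = C_\beta$ is immediate, since then $\chi_\alpha = \chi_\beta$ and $C_\alpha,C_\beta$ are the same pull-back. For the converse, assume $C_\alpha = C_\beta$ with $\alpha,\beta\in\rr\setminus\qq$. Then $(\zz^2;u,+,<_\alpha)$ and $(\zz^2;u,+,<_\beta)$ are both universal covers of $(\zz;+,C_\alpha)$, with $u = (1,0)$ and covering map $(k,l)\mapsto l$ in each case, so by the uniqueness in Lemma~\ref{prop:cover} there is an ordered-group isomorphism $\phi$ between them fixing $u$ and commuting with the covering maps. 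The latter forces the second coordinate of $\phi(0,1)$ to be $1$, hence $\phi(k,l) = (k+cl, l)$ for some $c\in\zz$; since $<_\alpha$ and $<_\beta$ are the pull-backs of the order on $\rr$ along $(k,l)\mapsto k+\alpha l$ and $(k,l)\mapsto k+\beta l$, order-preservation of $\phi$ becomes $k+\alpha l < k'+\alpha l'\iff k + (c+\beta)l < k'+(c+\beta)l'$ for all integers $k,k',l,l'$. Separating $\alpha$ from $c+\beta$ by a rational if they were unequal gives a contradiction, so $\alpha = c+\beta$, i.e. $\alpha - \beta = c\in\zz$.

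The main obstacle I expect is the bookkeeping in the first part, namely tracking the $k\mapsto -k$ symmetry: the isomorphism of covers from Lemma~\ref{lem: linearorderclassification} need not respect the covering maps on the nose, only up to the automorphism $\bar\phi$ of $\zz$, and correspondingly $C$ may land on the reversed orders $C_-$ or $C_{-\alpha}$ rather than on $C_+$ or $C_\alpha$. A secondary point that needs care is the normalizations: that the covering map coming out of Proposition~\ref{prop: cyclicvslinear} is literally $(k,l)\mapsto l$, that $[0,u)_<$ is a transversal for $\langle u\rangle$, and that the cyclic orders induced by $<_\mathrm{lex}$ and $<_\alpha$ with this covering map are exactly $C_+$ and $C_\alpha$, matching the definitions fixed earlier.
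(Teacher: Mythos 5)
Your proof is correct and follows the same overall route as the paper: both arguments pass everything through the universal-cover correspondence and Lemma~\ref{lem: linearorderclassification}, and both handle the residual ambiguity by noting that the only group automorphisms of $(\zz;+)$ are $\pm\mathrm{id}$, with $-\mathrm{id}$ sending $C_+$ to $C_-$ and $C_\alpha$ to $C_{-\alpha}$. The one place where you genuinely diverge is the forward direction of the ``moreover'' clause: the paper normalizes $\alpha,\beta$ into $[-1/2,1/2)_{\rr\setminus\qq}$ and runs a three-way case analysis off the uniqueness clause of Lemma~\ref{lem: linearorderclassification}, using $C_\gamma\neq C_{-\gamma}$ to eliminate the mixed case, whereas you compare the two universal covers $(\zz^2;u,+,<_\alpha)$ and $(\zz^2;u,+,<_\beta)$ directly, pin the isomorphism down to $(k,l)\mapsto(k+cl,l)$, and extract $\alpha-\beta=c$ by separating $\alpha$ from $c+\beta$ with a rational. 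Your version is a bit more self-contained and avoids the normalization step; its only soft spot is the assertion that the isomorphism of universal covers can be taken to commute with the covering maps, which Lemma~\ref{prop:cover} does not state explicitly but which follows from its construction (or, alternatively, one can let the second coordinate of $\phi(0,1)$ be $\pm 1$ and rule out $-1$ exactly as the paper does, via $C_\alpha\neq C_{-\alpha}$). Your bookkeeping in the first part---checking that the covering map $(k,l)\mapsto l$ induces precisely $C_+$ from $<_{\mathrm{lex}}$ and $C_\alpha$ from $<_\alpha$---is detail the paper leaves implicit, and it checks out.
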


\begin{proof}
Suppose $C$ is an additive cyclic order on $\zz$. It follows from Proposition \ref{prop: cyclicvslinear} and Lemma \ref{lem: linearorderclassification} that $(\zz; +, C)$ is isomorphic to either $(\zz; +, C_+)$ or $(\zz; +, C_\alpha)$ for $\alpha \in \rr \setminus \qq$. Note that the only group automorphism of $(\zz; +)$ are $\text{id}_\zz$ and $k \mapsto -k$. The latter maps $C_+$ to $C_{-}$ and $C_\alpha$ to $C_{-\alpha}$ for all $\alpha \in \rr \setminus \qq$. The first statement of the proposition follows.

The backward direction of the second statement follows from the easy observations that $C_\alpha = C_{\alpha+1}$. For the forward direction of the second statement, suppose $\alpha, \beta \in \rr \setminus \qq$ and  $C_\alpha = C_\beta$. In particular, this implies that
$$(\zz; +, C_{-\alpha}) \cong (\zz; +, C_\alpha) \cong (\zz; +, C_\beta) \cong  (\zz; +, C_{-\beta}).$$
By the backward direction of the second statement, we can arrange that $\alpha$ and $\beta$ are in $[-1\slash 2, 1\slash 2)_{\rr \setminus \qq}$. If  both $\alpha$ and $\beta$ are in  $[0, 1\slash 2)_{\rr \setminus \qq}$, then it follows from Lemma \ref{lem: linearorderclassification} that $\alpha =\beta$. If   both $\alpha$ and $\beta$ are in  $[-1\slash 2, 0)_{\rr \setminus \qq}$, a similar argument shows that $-\alpha = -\beta$, and so $\alpha =\beta$. Finally, suppose one out of $\alpha, \beta$ is in  $[-1\slash 2, 0)_{\rr \setminus \qq}$ and the other is in $[0, 1\slash 2)_{\rr \setminus \qq}$. A similar argument as the previous cases give us that $\alpha = -\beta$. However, $C_\alpha$ is always different from $C_{-\alpha}$, so this last case never happens.
 \end{proof}

\noindent We also need a well-known result of Kronecker: If $( \alpha_1, \ldots, \alpha_n) \in \rr^n$ is a $\qq$-linearly independent tuple of variables, then $$ \big( \alpha_1 m +\zz \ldots, \alpha_n m + \zz \big)_{ m >0} \text{ is dense in }  (\rr \slash \zz )^n,$$
where the latter is equipped with the obvious topology. See also \cite{Minh-1} for another instance where a phenomenon of this type is of central importance in dealing with cyclic orders.

\begin{thm}
Let  $\alpha $ be in $\rr \setminus \qq$. Then $( \zz; +, C_\alpha)$ is a reduct of neither $(\mathbb{Z};+,<)$ nor $(\mathbb{Z}; +, \prec_p)$ for any prime $p$.
\end{thm}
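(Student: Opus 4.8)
The plan is to show $(\zz; +, C_\alpha)$ is not a reduct of $(\zz; +, <)$ or $(\zz; +, \prec_p)$ by exploiting a dichotomy: both potential hosts have strong ``tameness of unary definable sets'' features that $C_\alpha$ violates. The cleanest route is to argue that every unary set definable in $(\zz; +, <)$ or in $(\zz; +, \prec_p)$ is already definable in a proper reduct, or at least has a shape incompatible with the sets $C_\alpha$ produces, whereas $C_\alpha$ defines genuinely new unary sets via the irrational rotation. Concretely, for $C_\alpha$ the set $\{k : C_\alpha(0, k, k_0)\}$ for a fixed parameter $k_0$ pulls back, under $\chi_\alpha$, to the preimage of an arc $[0+\zz, \alpha k_0 + \zz)_{\rr\slash \zz}$; by Kronecker's theorem (the $n=1$ case: $(\alpha m + \zz)_{m>0}$ is dense in $\rr\slash\zz$) this is a ``Beatty-like'' set which is neither eventually periodic nor a finite Boolean combination of cosets-of-subgroups-intersected-with-half-lines.

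First I would invoke the reduct classification results quoted in the introduction. By \cite{conant}, any reduct of $(\zz; +, <)$ expanding $(\zz; +)$ is definably equivalent to $(\zz; +)$ or to $(\zz; +, <)$; by \cite[1.13]{AldE}, any reduct of $(\zz; +, \prec_p)$ expanding $(\zz; +)$ is definably equivalent to $(\zz; +)$ or $(\zz; +, \prec_p)$. So if $(\zz; +, C_\alpha)$ were a reduct of $(\zz; +, <)$ it would be definably equivalent to $(\zz; +)$ or $(\zz; +, <)$, and similarly in the $\prec_p$ case. Thus it suffices to rule out each of the four concrete possibilities: (i) $(\zz; +, C_\alpha)$ is definably equivalent to $(\zz; +)$; (ii) definably equivalent to $(\zz; +, <)$; (iii) a reduct of $(\zz; +, \prec_p)$ and hence definably equivalent to $(\zz; +)$ or (iv) to $(\zz; +, \prec_p)$. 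Case (i) is trivial: $C_\alpha$ is a ternary relation not definable in the stable, in fact one-based, group $(\zz; +)$ — e.g. $(\zz; +)$ is stable while $(\zz; +, C_\alpha)$, interpreting the linear order $<_\alpha$ on $\zz^2$, is unstable (it has the order property via $\psi_\alpha$), giving a contradiction.

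For case (ii): if $(\zz; +, C_\alpha)$ were definably equivalent to $(\zz; +, <)$, then $<$ would be definable from $+$ and $C_\alpha$, and conversely $C_\alpha$ from $+$ and $<$. The quickest contradiction is archimedean-type: $<_\alpha$ makes the universal cover $(\zz^2; u, +, <_\alpha)$ archimedean, whereas the universal cover $(\zz^2; u, +, <_{\mathrm{lex}})$ associated to $<$ (equivalently to $C_+$) is non-archimedean, and this distinction is preserved under definable equivalence of the cyclically ordered groups via Lemma~\ref{prop:cover}. More carefully: definable equivalence of $(\zz; +, C_\alpha)$ and $(\zz; +, C_+)$ would give, by the universal-cover correspondence, a definable bijection between the structures $(\zz^2; u, +, <_\alpha)$ and $(\zz^2; u, +, <_{\mathrm{lex}})$ identifying the archimedean property, which is impossible. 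Alternatively, and more robustly, one shows $(\zz; +, <)$ is ``order-complete'' enough that a definable copy of $C_\alpha$ would make $<$ definable from a cyclic order that it cannot reconstruct — the archimedean argument is cleaner, so I would run that.

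The main obstacle, and the case deserving the most care, is (iv): that $(\zz; +, C_\alpha)$ is definably equivalent to $(\zz; +, \prec_p)$. Here one cannot use stability or archimedean arguments, since $(\zz; +, \prec_p)$ is also unstable and ``non-archimedean-ish''. Instead I would compare unary definable sets. In $(\zz; +, \prec_p)$ every unary definable set is, up to finite symmetric difference, a finite Boolean combination of sets of the form $a + p^n \zz$ together with ``valuation balls'' $\{k : v_p(k) \geq n\}$ — in particular every unary definable set has a density along arithmetic progressions and is controlled by $p$-adic congruences. By contrast, using Kronecker's theorem, the $C_\alpha$-definable unary set $S = \{k : C(0+\zz, \alpha k + \zz, \alpha k_0 + \zz)\}$ for suitable $k_0$ (or more simply the ``positive cone'' $P = \{k>0 : C_\alpha(0,k,\text{large})\}$-type set, i.e. the pullback of a fixed arc) is a Beatty sequence $\{k : \{\alpha k\} \in I\}$ for an interval $I \subsetneq \rr\slash\zz$; such a set is not definable in $(\zz; +, \prec_p)$ because its intersection with any coset $a + p^n\zz$ is again (rescaling $k \mapsto a + p^n k$) a Beatty set for the irrational $p^n\alpha$, hence infinite and co-infinite in that coset with no eventual periodicity, which the $p$-adic description forbids. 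Making this last incompatibility precise — i.e. pinning down exactly what the unary definable sets of $(\zz; +, \prec_p)$ are (citing \cite{AldE}) and checking a Beatty set is not among them — is the technical heart of the argument. Symmetrically, one must also check in case (iii) that $C_\alpha$ (being non-definable in $(\zz;+)$, already handled as in (i)) rules that out, so the only real work is (iv).
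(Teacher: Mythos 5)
There is a genuine gap in your case (ii), which is the $(\mathbb{Z};+,<)$ half of the theorem. The argument you run there --- that definable equivalence of $(\zz;+,C_\alpha)$ with $(\zz;+,<)$ would, ``by the universal-cover correspondence,'' contradict the archimedean/non-archimedean dichotomy between $(\zz^2;u,+,<_\alpha)$ and $(\zz^2;u,+,<_{\mathrm{lex}})$ --- does not work. Definable equivalence is a statement about which subsets of $\zz^n$ are definable; it is not an isomorphism of cyclic orders, and the universal cover of Lemma~\ref{prop:cover} is built from the cyclic order itself, not from its family of definable sets. The paper itself supplies a counterexample to this pattern of reasoning: by Theorem~\ref{thm:diff}, $(\zz;+,C_\alpha)$ and $(\zz;+,C_{2\alpha})$ are definably equivalent, yet the cyclic orders are distinct, the structures are not isomorphic, and their universal covers $(\zz^2;u,+,<_\alpha)$ and $(\zz^2;u,+,<_{2\alpha})$ are non-isomorphic over $u$ by Lemma~\ref{lem: linearorderclassification}. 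So non-isomorphism of universal covers yields no contradiction with definable equivalence, and your fallback (``order-complete enough'') is too vague to assess. The preliminary reduction via the reduct classifications of \cite{conant} and \cite{AldE} is valid but also unnecessary.

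The repair is exactly the mechanism you already deploy for case (iv), and the paper uses it to dispatch both hosts at once without any case division: consider the single unary set $X=\{k:C_\alpha(0,k,1)\}$, i.e.\ the pullback under $\chi_\alpha$ of a fixed proper arc of $\rr/\zz$. By the one-dimensional Kronecker theorem, $X$ and $\zz\setminus X$ are both infinite and $X$ meets every infinite arithmetic progression; by quantifier elimination for Presburger arithmetic, any $(\zz;+,<)$-definable set with infinite complement must have an infinite arithmetic progression inside its complement --- contradiction. For $\prec_p$ one does not need a separate Beatty-versus-congruence analysis: \cite[Remark 3.2]{AldE} says every unary $(\zz;+,\prec_p)$-definable set is already $(\zz;+)$-definable, hence $(\zz;+,<)$-definable, reducing that case to the one just done. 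Your instinct that the Kronecker/Beatty-set obstruction is the heart of the matter is right; you just need to aim it at $(\zz;+,<)$ as well, where it replaces the broken archimedean argument.
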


\begin{proof}
Suppose the notations are as given. We will show that 
$X =\{ k : C(0, k, 1)\}$ is definable neither in $(\mathbb{Z};+,<)$ nor $(\mathbb{Z}; +, \prec_p)$. By  \cite[Remark 3.2]{AldE}, any subset of $\zz$ definable in  $(\mathbb{Z}; +, \prec_p)$ is definable in $(\zz; +)$. Hence, it suffices to show that $X$ is not definable in $(\mathbb{Z};+,<)$.

Toward a contradiction, suppose $X$ is definable in $(\mathbb{Z};+,<)$. By the one-dimensional case of Kronecker's approximation theorem, we get that  both $X$ and $\zz \setminus X$ are infinite. It then follows easily from the quantifier elimination for $(\mathbb{Z};+,<)$ that there is $k\neq 0$ and $l$ such that  $$\{ km + l : m > 0\} \subseteq \zz \setminus X.$$
On the other hand, by Kronecker's approximation theorem again, we have that $X \cap \{ km + l : m > 0\} \neq \emptyset$ for all $k \neq 0$ and all $l$, which is absurd. 
\end{proof}

\section{Unary definable sets and definable equivalence}

\noindent We now show that if $\alpha,\beta \in \rr \setminus \qq$ are $\qq$-linearly independent then $(\zz;+,C_\alpha)$ does not define $C_\beta$.
This follows from a characterization of unary definable sets in a cyclically ordered expansion of $(\zz; +)$ and Kronecker's approximation theorem.

\medskip \noindent Let $C$ be a cyclic order on a set $G$. A subset $J$ of $G$ is {\bf convex} (with respect to $C$) if whenever $a,b \in J$ are distinct we either have $\{ t : C(a,t,b) \} \subseteq J$ or $\{ t : C(b,t,a) \} \subseteq J$.
Intervals are convex, and it is easy to see that the union of a nested family of convex sets is convex.

\begin{lem}\label{lem:convex}
 Let $(G;+,C)$ be densely cyclically ordered abelian group with universal cover $(H;u,+,<)$ and covering map $\pi : H \to G$.
If $J \subseteq H$ is convex (with respect to $<$) then $\pi(J)$ is convex (with respect to $C$).
\end{lem}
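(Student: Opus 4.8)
The plan is to unwind the definition of the universal cover and push a convex-in-$<$ set through the covering map $\pi$. Recall that $\pi$ restricts to a bijection $[0,u)_H \to G$, and that $C$ on $G$ is defined via representatives in $[0,u)_H$: for $a,b,c \in [0,u)_H$ we have $C(\pi(a),\pi(b),\pi(c))$ iff $a<b<c$ or $b<c<a$ or $c<a<b$. Also, translation by $u$ is an order-automorphism of $H$ fixing $\pi$-fibers, so any convex $J \subseteq H$ can be compared with its translates $J+ku$. First I would reduce to the case where $J \subseteq [0,u)_H$: since the translates $(J+ku)_{k \in \zz}$ all map onto $\pi(J)$, and since $(nu)_{n>0}$ is cofinal and $(-nu)_{n>0}$ coinitial in $H$, one of two things happens — either $\pi\res J$ is injective, in which case after translating we may assume $J \subseteq [0,u)_H$; or $J$ meets two distinct fibers, in which case (using convexity of $J$ and that $J$ together with the block structure given by the $[ku,(k+1)u)_H$) I claim $\pi(J) = G$, which is trivially convex. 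The second alternative needs the density hypothesis only insofar as $G$ is infinite; more carefully, if $J$ contains $a$ and $a + ku$ with $k>0$ then by convexity $J \supseteq [a, a+ku]_H$, which already surjects onto $G$.

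So assume $J \subseteq [0,u)_H$ and $\pi\res J$ is injective. Take distinct $p,q \in \pi(J)$, say $p = \pi(a)$, $q = \pi(b)$ with $a,b \in J$ and, without loss of generality, $a < b$. I must show $\{t : C(q,t,p)\} \subseteq \pi(J)$ or $\{t : C(p,t,q)\} \subseteq \pi(J)$. By convexity of $J$ in $H$ we have $[a,b]_H \subseteq J$, hence $[a,b]_H \cap [0,u)_H = [a,b]_H \subseteq J$. Now for any $t \in G$ write $t = \pi(c)$ with $c \in [0,u)_H$. Unravelling the definition of $C$ on representatives in $[0,u)_H$: $C(p,t,q)$ with $a<b$ holds precisely when $a < c < b$, i.e.\ $c \in (a,b)_H$; and such $c$ lies in $[a,b]_H \subseteq J$, so $t = \pi(c) \in \pi(J)$. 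Therefore $\{t : C(p,t,q)\} \subseteq \pi(J)$, which is the desired conclusion.

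The one genuinely delicate point — the step I expect to be the main obstacle — is handling the boundary/fiber bookkeeping cleanly: namely verifying that when $\pi\res J$ fails to be injective the image is all of $G$, and conversely that after reducing to $J \subseteq [0,u)_H$ the three-way disjunction defining $C$ collapses to the single clause $a<c<b$ because all three representatives $a,b,c$ were chosen in the fundamental domain $[0,u)_H$. Once one is disciplined about always taking representatives in $[0,u)_H$ and remembers that $C$ restricted to such representatives is literally the ``is-between in the linear order'' relation, the verification is routine. I would also remark that the density hypothesis on $(G;+,C)$ is not essential to the statement as phrased — it is included because this is the case needed in the sequel — but it does let one avoid degenerate bookkeeping when $J$ is a single point or empty, where $\pi(J)$ is convex vacuously.
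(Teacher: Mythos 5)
Your strategy is the same as the paper's: split according to whether $J$ maps onto all of $G$ or fits inside a single fundamental domain, and in the latter case read convexity of $\pi(J)$ off the description of $C$ by representatives. The non-injective case is handled correctly. The one step that fails as written is the reduction ``if $\pi\res J$ is injective then after translating we may assume $J \subseteq [0,u)_H$.'' Injectivity of $\pi\res J$ on a convex $J$ only tells you that $b-a<u$ for all $a<b$ in $J$ (otherwise $a+u \in [a,b]_H \subseteq J$ and $\pi(a+u)=\pi(a)$); it does not place $J$ inside a translate of the \emph{half-open} domain $[0,u)_H$. Concretely, take $G=\rr/\zz$ with universal cover $(\rr;1,+,<)$ and $J=(0,1]$: then $\pi\res J$ is injective and $J$ is convex, but $J\subseteq[c,c+1)_\rr$ would force $c\le 0$ and $c>0$ simultaneously. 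The same phenomenon occurs in the case of actual interest $(\zz^2;+,<_\alpha)$, e.g.\ for the convex set $\{(k,l): 0<k+\alpha l\le 1\}$.

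The gap is easily repaired, because your final computation never uses the global containment $J\subseteq[0,u)_H$: it only involves two points $a<b$ of $J$ at a time, together with $[a,b]_H\subseteq J$ and $b-a<u$. By additivity of $C$, the representative description of $C$ is valid over any translated domain $[d,d+u)_H$, so taking $d=a$ you get: for $t=\pi(c)$ with $c\in[a,a+u)_H$, the relation $C(\pi(a),t,\pi(b))$ holds iff $a<c<b$, and every such $c$ lies in $[a,b]_H\subseteq J$. This is in effect what the paper does by first writing $J$ as a union of closed intervals $[g,h]$ and translating each by $-g$. With this repair your argument is complete; your side remark that density is not really needed is consistent with the paper's proof, which does not use it either.
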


\begin{proof}
Let $J \subseteq H$ be convex.
Then $J$ is the union of a nested family of closed intervals $\{ I_a : a \in L\}$, i.e. we either have $I_a \subseteq I_b$ or $I_b \subseteq I_a$ for all $a,b \in  L$.
It follows that $\pi(J)$ is the union of the nested family $\{ \pi(I_a) : a \in L\}$.
It suffices to show that $\pi(J)$ is convex when $J$ is a closed interval.
Suppose $J = [g,h]$.

We first suppose $h - g \geq u$.
Then $[0,u]_H \subseteq J - g$.
The restriction of $\pi$ to $[0,u]_H$ is a surjection so $\pi(J - g) = G$.
As $\pi(J - g) = \pi(J) - \pi(g)$, we have $\pi(J) = G + \pi(g) = G$.
So in particular $\pi(J)$ is convex.
Now suppose $h - g < u$.
Then $J - g \subseteq [0,u]_H$.
It follows that $$\pi(J - g) = \{ t \in G : C(0,t, \pi(g - h) )\}$$ so $\pi(J - g)$ is convex.
Then $\pi(J) = \pi(J - g) + \pi(g)$ is a translate of a convex set and is hence convex.
\end{proof}

 \medskip \noindent  Suppose $(G; +, \ldots)$ be a structure where $(G; +)$ is an abelian group and $(G;  \ldots)$ expands either a linear order $<$ or a cyclic order $C$; convexity in the definitions below is with respect to either $<$ or $C$. A {\bf tmc-set}  is a translation of a multiple of a convex subset of $G$, that is, a subset of $G$ the form $a + mJ$ with $a \in G$ and convex $J\subseteq G$. A {\bf cnc-set} is a set of the form $J \cap (a+ nG)$ with  convex $J \subseteq G$ and $a \in G$.  
 
\medskip \noindent We say that $(G; +, \ldots)$  is {\bf tmc-minimal} if every definable unary set is a finite  union of tmc-sets and that $(G; +, \ldots)$  is {\bf cnc-minimal} if every definable unary set is a finite  union of cnc-sets. These two notions coincide for linearly ordered groups.

\begin{lem}\label{lem:basic-cnc}
Suppose that $(G;+,<)$ is a linearly ordered group.
Then the collection of tmc-sets and the collection of cnc-sets coincide.
\end{lem}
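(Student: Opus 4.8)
The plan is to prove both inclusions between the two families of subsets of $G$ by direct manipulation, using the structure of a linearly ordered abelian group. Recall a tmc-set has the form $a + mJ$ with $J$ convex, and a cnc-set has the form $J \cap (a + nG)$ with $J$ convex. The key observations are: (i) in a linearly ordered abelian group, for $m \geq 1$ the subgroup $mG$ is itself convex-closed in the sense that $mG$ is a pure subgroup and multiplication by $m$ is an order-isomorphism from $G$ onto $mG$; and (ii) a coset $a + nG$ is a ``generalized arithmetic-progression''-type set whose intersection with a convex set is again controlled.

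First I would show every tmc-set is a cnc-set. Given $a + mJ$ with $J \subseteq G$ convex, note that multiplication by $m$ is an order-preserving injection $G \to G$, so $mJ$ is a convex subset of the convex set $mG$; hence $mJ = K \cap mG$ for some convex $K \subseteq G$ (take $K$ to be the convex hull of $mJ$ in $G$ — one checks using purity of $mG$ that the convex hull meets $mG$ in exactly $mJ$). Translating by $a$, we get $a + mJ = (a+K) \cap (a + mG)$, which is a cnc-set since $a + K$ is convex. I would need to be slightly careful when $m = 0$ (then $mJ = \{0\}$, still a cnc-set) and when $J = \emptyset$ (trivial).

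For the converse, given a cnc-set $J \cap (a + nG)$ with $J$ convex and $n \geq 1$: if $a + nG$ is empty the set is empty, so assume $a \in nG$, say $a = nb$; then $a + nG = nG$ and we must write $J \cap nG$ as a tmc-set. Multiplication by $n$ is an order-isomorphism $G \to nG$, so its inverse carries the convex set $J \cap nG$ to a convex set $J' \subseteq G$ with $nJ' = J \cap nG$; thus $J \cap (a+nG) = 0 + nJ'$ is a tmc-set. Concretely $J' = \{ g \in G : ng \in J \}$, and convexity of $J'$ is immediate from order-preservation of multiplication by $n$.

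The main obstacle is the first direction: verifying that the convex hull in $G$ of the set $mJ$ meets $mG$ in precisely $mJ$, i.e. that no ``new'' elements of $mG$ sneak into the convex hull. This uses that $mG$ is a \emph{convex} subgroup's complement-free — more precisely, that if $mg$ lies between two elements $mj_1, mj_2$ of $mJ$ then $g$ lies between $j_1$ and $j_2$ (order-reflection of multiplication by $m$, valid since $G$ is torsion-free and ordered), hence $g \in J$ by convexity of $J$ and $mg \in mJ$. Once this is pinned down the rest is bookkeeping. I would also remark that this equivalence is exactly what makes the notions of tmc-minimality and cnc-minimality coincide for linearly ordered groups, as asserted just above in the text.
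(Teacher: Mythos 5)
Your argument is essentially the paper's: in both directions the key step is that multiplication by $m\geq 1$ is order-preserving and order-reflecting, so the convex hull $K$ of $mJ$ satisfies $K\cap mG=mJ$ (giving tmc $\Rightarrow$ cnc), and the pullback $\{g: ng\in I\}$ of a convex set is convex (giving cnc $\Rightarrow$ tmc). One step in your converse direction is mis-stated, though: the coset $a+nG$ is never empty, and ``assume $a\in nG$'' is not a legitimate reduction (e.g.\ $1+2\zz$ in $\zz$). The correct reduction is the one you already use in the forward direction: since a translate of a tmc-set is a tmc-set, write $J\cap(a+nG)=a+\bigl((J-a)\cap nG\bigr)$ and apply your construction to the convex set $J-a$; equivalently, take $J'=\{g\in G: ng\in J-a\}$ directly, which is what the paper does. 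With that repair (and a word about the trivial cases $m=0$, $n=0$, which you already flag), the proof is complete and matches the paper's.
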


\begin{proof}
Let $X \subseteq G$ be an cnc-set.
Let $X = I \cap A$ for a convex $I \subseteq G$ and $A  = a + n G$.
Let $J = \{ g \in G : ng \in I - a \}$.
Monotonocity of $g \mapsto ng$ implies $J$ is convex as $I - a$ is convex.
The definition of $J$ implies $g \in J$ if and only if $a + ng \in I$.
As $a + ng \in A$ for all $g \in G$ we have $g \in J$ if and only if $a + ng \in I \cap A$.
So $X = a + nJ$.

Conversely, suppose $J$ is convex.
A translate of an cnc-set is an cnc-set, so it suffices to show $nJ$ is an cnc-set.
Let $I$ be the convex hull of $nJ$.
Then $nJ \subseteq I \cap nG$.
We show the other inclusion.
Suppose $g \in G$ and $ng \in I$.
Then $nh \leq ng \leq  nh'$ for some $nh,nh' \in nJ$.
Then $h \leq g \leq h'$, so $g \in  J$ as $h,h' \in J$ and $J$ is convex.
Thus $ng \in nJ$. 
\end{proof}

\noindent In cyclically ordered abelian groups there may be tmc-sets which are not cnc-sets. More precisely, it can be shown that there are tmc-sets which are not even finite unions of cnc-sets. An example is the set $\{ 2k : \alpha k \in [0, 1\slash 2) +\zz\}$ in the structure $( \zz; +, C_\alpha)$ with $\alpha \in \rr \setminus \qq$. As this will not be used later, we leave the proof to the interested readers.

\begin{lem}\label{lem:lin-cnc}
If $\alpha \in \rr \setminus \qq$ then $(\zz^2;+,<_\alpha)$ is cnc-minimal.
\end{lem}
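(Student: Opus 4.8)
The plan is to reduce to understanding definable subsets of $\zz^2$ in the ordered group $(\zz^2; +, <_\alpha)$ via quantifier elimination, and then show each such set is a finite union of cnc-sets. Since $<_\alpha$ is the pull-back of the archimedean order on $\rr$ along $\psi_\alpha\colon (k,l) \mapsto k + \alpha l$, the structure $(\zz^2; +, <_\alpha)$ is a densely ordered abelian group whose order is that of an ordered subgroup of $(\rr; +, <)$. The key tool is the classical quantifier-elimination/definability result for ordered abelian groups elementarily equivalent to $(\zz; +, <)$ (or more precisely for finitely generated ones with the $\zz^2/n\zz^2$ finite, as used in Theorem~\ref{Thm: dp-minimality} via \cite{JaSiWa}): every definable subset of $\zz^2$ is a Boolean combination of sets defined by linear inequalities $\lambda(x) < a$ or $\lambda(x) = a$ and congruence conditions $x \in a + n\zz^2$, where $\lambda$ ranges over $\zz$-linear functionals on $\zz^2$.

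First I would record that a set defined by finitely many linear inequalities in the coordinates (with no congruence conditions) is convex: since the order is archimedean, such a set is the pull-back under $\psi_\alpha$ of an intersection of half-lines in $\rr$, hence an interval, and intervals are convex. Next, a congruence condition $x \in a + n\zz^2$ is exactly a coset; and a Boolean combination of cosets modulo various $n$ is again a finite union of cosets modulo the common multiple $N$, i.e.\ a finite union of sets of the form $b + N\zz^2$. So a general definable set is, after putting the Boolean combination into disjunctive normal form, a finite union of sets of the form $I \cap (b + N\zz^2)$ with $I$ convex (defined by linear conditions) and $b + N\zz^2$ a coset — which is precisely a cnc-set. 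The only subtlety is handling the difference of two convex sets $I_1 \setminus I_2$: in a linear order, $I_1 \cap (\text{complement of }I_2)$ is a union of at most two convex pieces, so this does not leave the class of finite unions of cnc-sets. One also needs that the family of linear functionals appearing is finite for a fixed formula, which is immediate since each formula mentions only finitely many terms.

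The main obstacle I expect is making the appeal to quantifier elimination precise: $(\zz^2; +, <_\alpha)$ is not a pure $\zz$-group but a rank-$2$ ordered abelian group, so I should either cite the general QE for ordered abelian groups in a suitable language (with predicates for the subgroups $n\zz^2$, or in the Presburger-style language with the $<$-reduct) or argue directly that the theory of $(\zz^2; +, <_\alpha)$ eliminates quantifiers down to the stated form. A clean route is to observe that $(\zz^2; +, <_\alpha) \equiv (\zz^2; +, <_\alpha)$ embeds cofinally and coinitially into its divisible hull $(\qq^2; +, <_\alpha)$ with $\zz^2/n\zz^2$ finite, so the usual Presburger-type argument (cell decomposition in one variable, as in the proof that $(\zz;+,<)$ is Presburger) applies mutatis mutandis; once QE in this form is in hand, the combinatorial bookkeeping above finishes the proof. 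I would keep this step short, citing the literature on Presburger-like ordered abelian groups, and spend the written argument on the passage from the QE normal form to the finite union of cnc-sets.
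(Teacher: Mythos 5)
Your proposal is correct and follows essentially the same route as the paper: quantifier elimination for $(\zz^2;+,<_\alpha)$ in the language augmented by predicates for the subgroups $n\zz^2$ (the paper cites Weispfenning for exactly this), followed by the observation that the inequality atoms cut out upward/downward-closed (hence convex) sets, the congruence atoms cut out cosets whose Boolean combinations are finite unions of cosets, and the (in)equality atoms give finite or cofinite sets. Your added remark that complements of convex sets split into at most two convex pieces is the same bookkeeping the paper performs by listing both $<$ and $\geq$ (and $=$, $\neq$) among the atomic types.
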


\begin{proof}
The structure $(\zz^2;+,<_\alpha)$ admits quantifier elimination in the extended language where we add a predicate symbol defining $n\zz$ for each $n$. see  \cite{weis}, for example. 
It follows that any definable subset of $\zz^2$ is a finite union of finite intersections of sets of one of the following types:
\begin{enumerate}
\item $\{ t : k_1 t + a <_\alpha k_2 t + b\}$ for some $k_1,k_2$ and $a,b \in \zz^2$,
\item $\{ t : k_1 t + a \geq_\alpha k_2 t + b\}$ for some $k_1,k_2$ and $a,b \in \zz^2$,
\item $\{ t : k t + a \in n\zz^2\}$ for some $k,n$ and $a \in \zz^2$,
\item $\{ t : k t + a \notin n\zz^2\}$ for some $k,n$ and $a \in \zz^2$,
\item $\{ t : k_1 t + a_1 = k_2 t + a_2 \}$ for some $k_1,k_2$ and $a_1,a_2 \in \zz^2$,
\item $\{ t : k_1 t + a_1 \neq k_2 t + a_2\}$ for some $k_1,k_2$ and $a_1,a_2 \in \zz^2$.
\end{enumerate}
We show that any finite intersection of sets of type $(1)$-$(6)$ is a finite union of cnc-sets.
Every set of type $(1)$ or $(2)$ is either upwards or downwards closed.
It follows that any intersection of such sets is convex.

Suppose
$ A = \{ t : kt + a \in n\zz^2\}.$
Suppose $A$ is nonempty and $t' \in A$.
Then $kt + a \in n\zz^2$ if and only if 
$$ (kt + a) - (kt' + a) = k(t - t') \in n\zz^2.$$
For any $m$ we have $km \in n\zz$ if and only if $m$ is in $N\zz$ where $N = n/ \gcd(k,n)$.
So $t \in A$ if and only if $t - t' \in N\zz^2$, equivalently if $t \in N\zz^2 + t'$.
So $A$ is a coset of a subgroup of the form $N\zz^2$.
So any finite intersection of sets of type $(3)$ and $(4)$ is a boolean combination of cosets of subgroups of the form $n\zz^2$.
As $|\zz^2/n\zz^2| < \infty$, a complement of a coset of a subgroup of the form $n\zz^2$ is a finite union of such cosets.
It follows that any boolean combination of cosets of subgroups of the form $n\zz^2$ is a finite union of such cosets.

We have shown that a finite intersection of sets of type $(1)$-$(4)$ is an intersection of a convex set by a finite union of cosets of subgroups of the form $n\zz^2$.
It follows that any finite intersection of sets of type $(1)$-$(4)$ is a finite union of cnc-sets.

Any set of type $(5)$ or $(6)$ is either empty, $\zz^2$, a singleton, or the complement of a singleton.
It follows that any finite intersection of such sets is either finite or co-finite.
Suppose that $X$ is a finite union of cnc-sets.
The intersection of a $X$ and a finite set is finite, hence is a finite union of cnc-sets.
It is easy to see that the intersection of $X$ and a co-finite set is a finite union of cnc-sets.
\end{proof}

\begin{thm}\label{thm:cnc-minimality}
Let $\alpha \in \rr \setminus \qq$.
Then $(\zz;+,C_\alpha)$ is tmc-minimal.
\end{thm}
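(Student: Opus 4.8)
The plan is to reduce the statement to cnc-minimality of the universal cover. Write $(\zz^2; u, +, <_\alpha)$, $u=(1,0)$, for the universal cover of $(\zz;+,C_\alpha)$ furnished by Proposition~\ref{prop: cyclicvslinear} and the discussion before Lemma~\ref{lem: linearorderclassification}, and let $\pi\colon \zz^2 \to \zz$ be the covering map, i.e.\ the projection to the second coordinate; it is a surjective group homomorphism. I would first record that $C_\alpha$ is a \emph{dense} cyclic order, so that Lemma~\ref{lem:convex} applies here: if $j\neq l$ in $\zz$ then $\alpha j+\zz \neq \alpha l+\zz$ because $\alpha\notin\qq$, and by the one-dimensional case of Kronecker's approximation theorem $(\alpha k+\zz)_k$ is dense in $\rr\slash\zz$, so some $\alpha k+\zz$ lies on the open arc from $\alpha j+\zz$ to $\alpha l+\zz$, which gives $C_\alpha(j,k,l)$ with $k\notin\{j,l\}$.

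Next, I would take an arbitrary set $X\subseteq\zz$ definable in $(\zz;+,C_\alpha)$ and transport it to the cover. By the last assertion of Lemma~\ref{prop:cover}, $\pi$ restricts to an isomorphism $g\colon ([0,u)_{\zz^2};\tilde{+},\tilde{C}) \to (\zz;+,C_\alpha)$, and the set $[0,u)_{\zz^2}$ together with the operations $\tilde{+}$ and $\tilde{C}$ is definable in $(\zz^2;+,<_\alpha)$ over the parameter $u$. Hence $Y:=g^{-1}(X)\subseteq [0,u)_{\zz^2}$, regarded as a subset of $\zz^2$, is definable in $(\zz^2;+,<_\alpha)$, and $\pi(Y)=g(Y)=X$. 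By Lemma~\ref{lem:lin-cnc} the set $Y$ is a finite union of cnc-sets of $\zz^2$, so by Lemma~\ref{lem:basic-cnc} it is a finite union of tmc-sets: $Y=\bigcup_{i=1}^N (a_i+m_iJ_i)$ with $a_i\in\zz^2$, $m_i\in\nn$, and each $J_i\subseteq\zz^2$ convex with respect to $<_\alpha$.

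Finally I would push this description forward along $\pi$. Since $\pi$ is a group homomorphism, $\pi(a_i+m_iJ_i)=\pi(a_i)+m_i\,\pi(J_i)$, whence $X=\pi(Y)=\bigcup_{i=1}^N \bigl(\pi(a_i)+m_i\,\pi(J_i)\bigr)$. By Lemma~\ref{lem:convex} each $\pi(J_i)$ is convex with respect to $C_\alpha$, so $\pi(a_i)+m_i\,\pi(J_i)$ is a translation of a multiple of a $C_\alpha$-convex set, i.e.\ a tmc-set of $(\zz;+,C_\alpha)$. Therefore $X$ is a finite union of tmc-sets, as desired.

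All the real work has already been done in Lemmas~\ref{lem:convex},~\ref{lem:basic-cnc} and~\ref{lem:lin-cnc}; the only points that require attention are the bookkeeping of the cover/quotient dictionary (that definability transfers from $(\zz;+,C_\alpha)$ to $(\zz^2;+,<_\alpha)$ through the fundamental domain $[0,u)_{\zz^2}$, and that additivity of $\pi$ makes it carry tmc-sets of $\zz^2$ to tmc-sets of $\zz$) and the verification that $C_\alpha$ is dense so that Lemma~\ref{lem:convex} is applicable. I do not expect either to be a genuine obstacle, so the proof should be short.
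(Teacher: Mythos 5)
Your proof is correct and follows essentially the same route as the paper's: pull $X$ back to a definable subset of the fundamental domain $[0,u)_{\zz^2}$ in the universal cover, decompose it into cnc-sets via Lemma~\ref{lem:lin-cnc}, convert those to tmc-sets via Lemma~\ref{lem:basic-cnc}, and push forward along the covering homomorphism using Lemma~\ref{lem:convex}. The only difference is that you explicitly verify the density hypothesis of Lemma~\ref{lem:convex} (via Kronecker) and the definability transfer to the cover, both of which the paper leaves implicit.
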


\begin{proof}

Suppose $X \subseteq \zz$ is definable.
Set $$Y = \pi^{-1}(X) \cap [0,u)_{\zz^2}.$$
Then $X = \pi(Y)$ and  $Y$ is a finite union of cnc-sets $Y_1,\ldots,Y_k$ by \ref{lem:lin-cnc}. As 
$$ \pi(Y) = \pi(Y_1) \cup \ldots \cup \pi(Y_k)$$
we may assume $Y$ is an cnc-set.
Applying Lemma~\ref{lem:basic-cnc} we suppose that $Y = a + nJ$ for $a \in \zz^2$ and convex $J \subseteq \zz^2$.
As $\pi$ is a homomorphism we have
$$ X = \pi(Y) = \pi(a) + n\pi(J).$$
It follows from Lemma~\ref{lem:convex} that $\pi(J)$ is convex.
Thus $X$ is a tmc-set.
\end{proof}

\noindent We say that $X \subseteq \zz$ is $C_\alpha$-dense if it is dense with respect to the obvious topology induced by $C_\alpha$.

\begin{lem}\label{lem:kron}
Suppose, $\alpha$ and $\beta$ in $\rr \setminus \qq$ are $\qq$-linearly independent and $J_\beta \subseteq \zz$ is $C_\beta$-convex and infinite, fix $n \geq 1,k$.
Then $k + nJ_\beta$ is $C_\alpha$-dense.
\end{lem}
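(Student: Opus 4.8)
The plan is to show that the $C_\alpha$-closure of $k + nJ_\beta$ is all of $\zz$ (with respect to the $C_\alpha$-topology), using the two-dimensional Kronecker theorem applied to the $\qq$-linearly independent pair $(\alpha,\beta)$. First I would reduce to a more concrete situation. Since $J_\beta$ is $C_\beta$-convex and infinite, its image under $\chi_\beta:\zz\to\rr\slash\zz$, $l\mapsto \beta l+\zz$, is dense in some arc of $\rr\slash\zz$; more precisely, by $C_\beta$-convexity there are $a,b\in\rr$ with $a-b\notin\zz$ such that $J_\beta$ is cofinal, in the $C_\beta$-sense, in (or contains arbitrarily $C_\beta$-close approximations to) an endpoint of the arc $[a,b)_{\rr\slash\zz}$ — in any case, because $J_\beta$ is infinite and $\beta$ irrational, $\{\beta l+\zz : l\in J_\beta\}$ accumulates at some point $\theta_0\in\rr\slash\zz$, and in fact its closure in $\rr\slash\zz$ has nonempty interior, i.e. contains some genuine arc $I\subseteq\rr\slash\zz$. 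The key point to extract is: there is an arc $I\subseteq\rr\slash\zz$ of positive length such that $\{\,l\in\zz : \beta l+\zz\in I\,\}\subseteq J_\beta$ up to finitely many exceptions, or at least $J_\beta$ is $C_\beta$-dense inside such an $I$.

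Next I would fix an arbitrary $j\in\zz$ together with a $C_\alpha$-basic neighbourhood of $j$, that is an arc $U\subseteq\rr\slash\zz$ with $\alpha j+\zz\in U$, and reduce the claim to finding $l\in J_\beta$ and $m$ so that $k+nl$ lands in the $\chi_\alpha$-preimage of $U$; concretely I want $l\in J_\beta$ with $\alpha(k+nl)+\zz\in U$, equivalently $\beta l+\zz\in I$ and $n\alpha\, l+\zz\in U-\alpha k+\zz$. So it suffices to find $l$ with $(\beta l+\zz,\ n\alpha l+\zz)$ lying in the open set $I\times (U-\alpha k)\subseteq(\rr\slash\zz)^2$. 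Here I invoke Kronecker: since $\alpha,\beta$ are $\qq$-linearly independent over $\qq$, the pair $(\beta,n\alpha)$ is also $\qq$-linearly independent (as $n\neq 0$), and hence $1,\beta,n\alpha$ need not be independent — wait, I actually need $\{\beta, n\alpha\}$ together with $1$ to be $\qq$-linearly independent for the torus statement as quoted. This is where care is needed: $1,\alpha,\beta$ being $\qq$-linearly independent is exactly the hypothesis I should be using (``$\alpha,\beta$ $\qq$-linearly independent'' in the context of pulling back to $\rr\slash\zz$ should be read as $1,\alpha,\beta$ independent over $\qq$; I would make this explicit), and then $1,\beta,n\alpha$ are $\qq$-linearly independent since a relation $p\cdot 1+q\beta+rn\alpha=0$ forces $p=q=rn=0$ hence $r=0$. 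So by the two-dimensional Kronecker theorem, $(\beta l+\zz,\ n\alpha l+\zz)_{l>0}$ is dense in $(\rr\slash\zz)^2$, and in particular meets the nonempty open set $I\times(U-\alpha k)$; choosing such an $l$ large enough also guarantees $l\in J_\beta$ (using that $J_\beta$ is $C_\beta$-dense in $I$, so among the infinitely many $l>0$ with $\beta l+\zz\in I$ we can pick one in $J_\beta$). This exhibits an element of $k+nJ_\beta$ in the given $C_\alpha$-neighbourhood of $j$, proving $C_\alpha$-density.

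The main obstacle I anticipate is the first step: turning the purely order-theoretic hypothesis ``$J_\beta$ is $C_\beta$-convex and infinite'' into the analytic statement that $\{\beta l+\zz : l\in J_\beta\}$ is $C_\beta$-dense in some genuine arc $I$ of $\rr\slash\zz$ (and that $J_\beta$ itself contains a $C_\beta$-dense subset of the $l$ with $\beta l+\zz\in I$). The point is that an infinite convex set in the cyclic order $C_\beta$ — which is the pullback of the circle order along the injective, dense-image character $\chi_\beta$ — must, by convexity, be an ``interval'' of $\zz$ in the $C_\beta$-order, and since $\chi_\beta(\zz)$ is dense in $\rr\slash\zz$ such an infinite interval has image dense in a sub-arc whose length is the ``$C_\beta$-length'' of $J_\beta$; infinitude of $J_\beta$ rules out the degenerate case of a single-point closure. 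Once this is pinned down, the rest is a routine application of Lemma~\ref{prop:cover}, the definition of $C_\alpha$, and Kronecker. I would also remark that the same argument with $\beta$ and $\alpha$ interchanged is not needed, and that the case analysis over whether $J_\beta$ is a one- or two-sided interval in $\zz$ can be absorbed into the statement ``$J_\beta$ is $C_\beta$-dense in an arc of positive length.''
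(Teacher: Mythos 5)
Your proposal is correct and follows essentially the same route as the paper: both arguments rest on the observation that an infinite $C_\beta$-convex subset of $\zz$ is the $\chi_\beta$-preimage of a convex subset of $\rr/\zz$ with nonempty interior, together with the two-dimensional Kronecker theorem for the pair (and you are right that ``$\qq$-linearly independent'' must here be read as $1,\alpha,\beta$ being independent over $\qq$, otherwise the lemma would contradict Corollary~\ref{cor: lineardep}). The only differences are cosmetic --- the paper disposes of the map $l \mapsto k+nl$ first, by noting that translation and multiplication by $n$ on $\rr/\zz$ preserve density, and then applies Kronecker to $(\alpha,\beta)$ directly, whereas you fold $n$ into the Kronecker step via the pair $(n\alpha,\beta)$ --- but when writing it up you should commit to the strong form of your first step, namely that $\chi_\beta^{-1}(I)\subseteq J_\beta$ for some open arc $I$ (which holds because $J_\beta$ is the full preimage of a convex set with nonempty interior), since mere $C_\beta$-density of $J_\beta$ in $I$ would not let you conclude $l\in J_\beta$ from $\beta l+\zz\in I$ after the Kronecker step.
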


\begin{proof}
Suppose $X \subseteq \zz$ is  $C_\alpha$-dense.
It follows by elementary topology that the image of $X$ under the map $l \mapsto k + nl$ is $C_\alpha$-dense in $k + n\zz$.
As $k + n\zz$ is $C_\alpha$-dense, it follows that $k + nX$ is $C_\alpha$-dense.
It therefore suffices to show that $J_\beta$ is dense with respect to the topology induced by $C_\alpha$.
We show that $J_\beta$ intersects an arbitrary infinite $C_\alpha$-convex $J_\alpha \subseteq \zz$.
Let $J'_\alpha$ and $J'_\beta$ be $C$-convex subsets of $\rr/\zz$ such that $J_\alpha = \chi_\alpha^{-1}(J'_\alpha)$ and $J_\beta = \chi^{-1}_\beta(J'_\beta)$.
Then $J'_\alpha, J'_\beta$ are infinite and so have nonempty interior.
It follows from $\qq$-linear independence of $\alpha$ and $\beta$ and Kronecker's theorem that 
$$\{   (\chi_\alpha(m), \chi_\beta(m)) :  m \in \zz \} \ \text{ is dense in }\ (\rr / \zz)^2. $$  In particular, there is $m \in \zz$ such that  $\big(\chi_\alpha(m), \chi_\beta(m)\big)  \in J'_\alpha \times J'_\beta$. Then $m$ is in $J_\alpha\cap J_\beta$, which implies that the latter is non-empty.
\end{proof}

\begin{cor} \label{cor: linearindep}
Suppose $\alpha, \beta \in \rr \setminus \qq$ are $\qq$-linearly independent. Then there is a $(\zz;+,C_\beta)$-definable subset of $\zz$ which is not definable in $( \zz; +, C_\alpha)$.
\end{cor}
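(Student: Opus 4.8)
The plan is to combine the tmc-minimality of $(\zz;+,C_\alpha)$ from Theorem~\ref{thm:cnc-minimality} with Lemma~\ref{lem:kron}. The guiding idea is that a ``proper arc'' with respect to $C_\beta$ is a set definable in $(\zz;+,C_\beta)$ that cannot be a finite union of $C_\alpha$-tmc-sets: any infinite such union is already $C_\beta$-dense, whereas a proper $C_\beta$-arc is not.

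First I would fix a witness. Let $X = \{ k \in \zz : C_\beta(0,k,1)\}$, which is patently definable in $(\zz;+,C_\beta)$, and write $\chi_\beta \colon \zz \to \rr/\zz$, $l \mapsto \beta l + \zz$, so that $X = \chi_\beta^{-1}(U)$ for the open arc $U = \{ t \in \rr/\zz : C(0+\zz, t, \beta+\zz)\}$. Since $\beta \notin \qq$ we have $0+\zz \neq \beta + \zz$, so $U$ is a nonempty open arc whose closure is a proper subset of $\rr/\zz$; moreover $\chi_\beta(\zz)$ is dense in $\rr/\zz$ and meets every nonempty open arc infinitely often. I would then record two consequences: $X$ is infinite (as $U$ is nonempty open), and $X$ is \emph{not} $C_\beta$-dense (since $\chi_\beta(X) \subseteq U$ is disjoint from the nonempty open arc complementary to $\overline{U}$, whose $\chi_\beta$-preimage is a nonempty $C_\beta$-open set disjoint from $X$). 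For context one may also note that $X$ is $C_\beta$-convex, being the $\chi_\beta$-preimage of an arc, though this is not actually needed.

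Next, assume toward a contradiction that $X$ is definable in $(\zz;+,C_\alpha)$. By Theorem~\ref{thm:cnc-minimality} it is a finite union $\bigcup_{i=1}^{r}(a_i + n_i J_i)$ of tmc-sets of $(\zz;+,C_\alpha)$, so each $J_i \subseteq \zz$ is $C_\alpha$-convex, $a_i \in \zz$, and $n_i \in \nn$. Since $X$ is infinite, some constituent $a_{i_0} + n_{i_0}J_{i_0}$ is infinite, which forces $n_{i_0} \geq 1$ and $J_{i_0}$ infinite. I would then invoke Lemma~\ref{lem:kron} with the roles of $\alpha$ and $\beta$ interchanged — legitimate because $\qq$-linear independence is symmetric and $\alpha \in \rr\setminus\qq$ — applied to the $C_\alpha$-convex infinite set $J_{i_0}$, the modulus $n_{i_0} \geq 1$, and the shift $a_{i_0}$; this yields that $a_{i_0} + n_{i_0}J_{i_0}$ is $C_\beta$-dense, hence so is its superset $X$, contradicting the previous paragraph.

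The one point that genuinely needs care — the ``main obstacle'' — is the choice of $X$. Applying Lemma~\ref{lem:kron} directly to $X$ itself would only show $X$ is $C_\beta$-dense, which is no contradiction for a general definable set; so one must take $X$ to be a genuine proper $C_\beta$-arc, simultaneously infinite and not $C_\beta$-dense, and then let the \emph{infinite} tmc-constituent supplied by tmc-minimality carry the contradiction. Everything else (injectivity and density of $\chi_\beta$, and the elementary-topology facts about arcs in $\rr/\zz$) is routine.
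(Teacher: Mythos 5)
Your proposal is correct and follows essentially the same route as the paper: exhibit a proper arc (an infinite convex set with infinite complement) in one of the cyclic orders, apply tmc-minimality of the other structure to extract an infinite constituent $a + nJ$ with $J$ convex and $n \geq 1$, and use Lemma~\ref{lem:kron} to conclude that constituent is dense in the first order, a contradiction. The only cosmetic difference is that the paper runs the argument with the roles of $\alpha$ and $\beta$ interchanged (producing a $C_\alpha$-definable set not definable from $C_\beta$, which suffices by symmetry of the hypothesis), whereas your witness $\{k : C_\beta(0,k,1)\}$ matches the statement literally.
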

\begin{proof}
Suppose that $\alpha$ and $\beta$ are $\mathbb{Q}$-linearly independent elements of $\rr\setminus \qq$.
Let $J_\alpha$ be an infinite $C_\alpha$-convex set definable in $(\mathbb{Z};+,C_\alpha)$ with infinite complement. Suppose $\zz \setminus J_\alpha$ is definable in $(\mathbb{Z};+,C_\beta)$. It follows from tmc-minmality of the latter that $\zz \setminus J_\alpha \supseteq k+ nJ_\beta$ where $J_\beta$ is $C_\beta$-covex and $n \geq 1$.
Lemma~\ref{lem:kron} shows that $k + nJ_\beta$ is $C_\alpha$-dense and thus intersects $J_\alpha$, contradiction.
\end{proof}

\noindent As consequence of Corollary \ref{cor: linearindep} we obtain uncountably many definably distinct dp-minimal expansions of $(\zz;+)$.

\begin{cor}
There are continumn many pairwise definably distinct cyclically ordered groups expanding $(\zz;+)$.
\end{cor}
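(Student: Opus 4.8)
The plan is to produce, from a large $\qq$-linearly independent family of irrationals, the structures $(\zz;+,C_\alpha)$ of Section~2 and to separate them up to definable equivalence by means of Corollary~\ref{cor: linearindep}.

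First I would fix a set $B\subseteq\rr\setminus\qq$ of cardinality $2^{\aleph_0}$ which is $\qq$-linearly independent. Such a $B$ exists because $\rr$ is a $\qq$-vector space of dimension $2^{\aleph_0}$: take a $\qq$-basis $B_0$ of $\rr$ of that cardinality and observe that a $\qq$-linearly independent set contains at most one rational number, so deleting from $B_0$ its unique rational element (if present) leaves a set $B$ of the same cardinality consisting entirely of irrationals. (Alternatively one may take $B$ to be a transcendence basis of $\rr$ over $\qq$, whose elements are automatically irrational and pairwise $\qq$-linearly independent.)

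Next, for each $\alpha\in B$ the relation $C_\alpha$ is an additive cyclic order on $\zz$, so $(\zz;+,C_\alpha)$ is a cyclically ordered group expanding $(\zz;+)$, and by Theorem~\ref{Thm: dp-minimality} it is moreover dp-minimal. I would then check that distinct $\alpha,\beta\in B$ give definably distinct structures. Since $\alpha$ and $\beta$ are $\qq$-linearly independent, Corollary~\ref{cor: linearindep} supplies a $(\zz;+,C_\beta)$-definable subset of $\zz$ that is not definable in $(\zz;+,C_\alpha)$; hence $(\zz;+,C_\beta)$ is not a reduct of $(\zz;+,C_\alpha)$, and so the two structures are not definably equivalent. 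Therefore $\{(\zz;+,C_\alpha):\alpha\in B\}$ is a family of cardinality $2^{\aleph_0}$ of pairwise definably distinct cyclically ordered groups expanding $(\zz;+)$. As the underlying set $\zz$ is countable there are at most $2^{\aleph_0}$ ternary relations on it, hence at most continuum many such structures altogether (one may also invoke Proposition~\ref{Prop: Cyclicorderclassification} for this), so the count is exactly the continuum.

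I do not expect any genuine obstacle here: the only extra inputs are the standard fact that $\dim_\qq\rr=2^{\aleph_0}$ together with the bookkeeping that keeps every chosen $\alpha$ irrational, while the substantive content is already contained in Corollary~\ref{cor: linearindep}, which in turn rests on tmc-minimality (Theorem~\ref{thm:cnc-minimality}) and Kronecker's approximation theorem.
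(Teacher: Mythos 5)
Your proof is correct and is exactly the argument the paper intends: the paper gives no explicit proof, treating the corollary as an immediate consequence of Corollary~\ref{cor: linearindep} applied to a continuum-sized $\qq$-linearly independent set of irrationals, which is precisely what you do. Your added bookkeeping (existence of such a set, and the trivial upper bound of $2^{\aleph_0}$) is fine.
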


\noindent We now show that if $\alpha,\beta \in \rr \setminus \qq$ are $\qq$-linearly dependent then $C_\beta$ is $(\zz;+,C_\alpha)$-definable.
It follows that $(\zz;+,C_\alpha)$ and $(\zz;+,C_\beta)$ are definably equivalent if and only if $\alpha$ and $\beta$ are $\qq$-linearly dependent, i,e, if $\beta = q\alpha + r$ for some $q,r \in \qq$.
This requires several steps.

\begin{lem} \label{lem:definabilityinterval}
Suppose $\alpha$ is in $\rr \setminus \qq$, $n$ is in $\nn^{\geq 1}$, and $r$ is in $\{0, \ldots, n-1\}$. Then the set $$  \big\{ l \ :\ \alpha l +\zz \in [r/n, (r+1)/n) +\zz \big\}$$ is definable in $( \zz; +, C_\alpha)$.
\end{lem}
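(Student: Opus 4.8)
The plan is to show that each set $\{l : \alpha l + \zz \in [r/n,(r+1)/n) + \zz\}$ can be built up, by finitely many boolean and group-theoretic operations available in $(\zz;+,C_\alpha)$, from the basic interval $\{l : C(0+\zz, \alpha l + \zz, \alpha(\text{something}) + \zz)\}$ which is $C_\alpha$-definable essentially by definition of $C_\alpha$. The key observation is that the points $r/n + \zz$ for $r = 0,\dots,n-1$ are exactly the $n$-torsion points of $\rr/\zz$, i.e. those $t + \zz$ with $nt \in \zz$; and torsion is definable group-theoretically. So I would first argue that the coset structure lets us reduce to understanding, inside $\zz$, the preimage under $\chi_\alpha$ of a half-open arc whose endpoints are torsion points of order dividing $n$.

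First I would handle the endpoints. Since $\gcd$ and Bézout give integers, and since $\alpha$ is irrational, there is no $l$ with $\alpha l + \zz$ equal to a torsion point; but there are elements $l$ with $\alpha l + \zz$ lying in the subgroup $\tfrac1n\zz/\zz$ "shifted" — more precisely I want to use that $n\zz$ is $C_\alpha$-definable (it is $\{l : \exists m\, (l = nm)\}$, definable in $(\zz;+)$ alone) and that $\chi_\alpha$ restricted to $n\zz$ has image the dense subgroup generated by $n\alpha + \zz$, whose associated cyclic order $C_{n\alpha}$ is the pullback of $C$ along $l \mapsto n\alpha l + \zz$. The point is: $C_{n\alpha}$ is interdefinable with the cyclic order that $C_\alpha$ induces on the definable subgroup $n\zz$, hence $C_\alpha$ can "see" arcs measured in multiples of $n\alpha$. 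Then I would express the target set as: $l$ belongs to it iff $\alpha l + \zz$ lies in the arc from $r/n + \zz$ to $(r+1)/n + \zz$. Writing $\alpha l = \lfloor \text{stuff}\rfloor$-type decompositions is the wrong move; instead I would use the following: $\alpha l + \zz \in [r/n,(r+1)/n) + \zz$ iff $n\alpha l + \zz \in [0, 1) + \zz$ is automatically true (useless) — so the right encoding is to compare $\alpha l$ against the finitely many test points, which is exactly where one needs a $C_\alpha$-definable way to detect "which of the $n$ equal sub-arcs of the circle does $\alpha l + \zz$ fall into". I would achieve this by: pick $l_0$ with $\alpha l_0 + \zz$ very close to (but, being irrational, not equal to) $1/n + \zz$ on the correct side — such $l_0$ exists by Kronecker — but $l_0$ is a fixed parameter, so being a parameter it is allowed in a definition. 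Then the arc $[r/n,(r+1)/n)+\zz$ differs from the $C_\alpha$-definable interval $[\alpha (r l_0)+\zz, \alpha((r+1)l_0)+\zz)_{\rr/\zz}$ by a set disjoint from the image of $\chi_\alpha$ on a suitable coset, up to finitely many exceptional $l$; one then corrects by a finite set (finite sets are definable) and by intersecting with cosets $\pmod{n}$.

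Concretely, I expect the clean argument to run: fix once and for all $l_0 \in \zz$ with $0 < \{\alpha l_0\} < 1/(n+1)$, say, chosen so that for every $r \in \{0,\dots,n-1\}$ we have $r/n \le \{\alpha (r l_0)\} $ close to $r/n$ from below is not quite forced — rather, choose $l_0$ so that $\{\alpha l_0\}$ is so close to $1/n$ from below that $\{\alpha(r l_0)\}$ lies in $[(r-1)/n, r/n)$... this kind of bookkeeping is the routine part. Having such $l_0$, the half-open arc $[r/n,(r+1)/n)+\zz$ agrees with $[\alpha r l_0 + \zz,\, \alpha(r+1)l_0 + \zz)_{\rr/\zz}$ outside a null, in fact outside the finite pullback, set — and both endpoints $\alpha r l_0 + \zz$, $\alpha (r+1) l_0 + \zz$ approximate $r/n + \zz$, $(r+1)/n + \zz$ well enough that $\chi_\alpha^{-1}$ of the symmetric difference is controlled by finitely many residues mod $n$, each of which is $C_\alpha$-definable as a coset of $n\zz$ intersected with a $C_\alpha$-interval. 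Then $\{l : C(0+\zz,\alpha l + \zz, \alpha m + \zz)\}$ being $C_\alpha$-definable for each fixed $m$ (immediate from the definition of $C_\alpha$ and translation), the target set is a finite boolean combination of such, completing the proof.

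The main obstacle is the endpoint/torsion-point mismatch: $r/n + \zz$ is never in the image of $\chi_\alpha$, so one cannot directly name the arc $[r/n,(r+1)/n)+\zz$ by $C_\alpha$-definable endpoints, and one must instead show that using approximating endpoints $\alpha (r l_0) + \zz$ (with a well-chosen parameter $l_0$) the discrepancy is absorbed into finitely many definable correction sets (finite sets together with cosets mod $n$). Making the choice of $l_0$ simultaneously work for all $r$ — i.e. proving the symmetric difference of the true arcs and the approximate arcs is, after pullback, a union of finitely many $C_\alpha$-definable pieces — is the technical heart; I'd expect the authors to exploit that $\chi_\alpha$ restricted to $n\zz$ has dense image with its own cyclic order $C_{n\alpha}$, reducing the problem to the already-understood structure $(\zz;+,C_{n\alpha})$ which is a reduct of $(\zz;+,C_\alpha)$ via the isomorphism $\zz \cong n\zz$.
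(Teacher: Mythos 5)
There is a genuine gap, and it sits exactly at what you call the ``technical heart.'' Your plan is to replace the arc $[r/n,(r+1)/n)+\zz$ by the arc with nameable endpoints $[\alpha r l_0+\zz,\ \alpha(r+1)l_0+\zz)$ and then absorb the symmetric difference into definable correction sets. But write $\gamma=\{\alpha l_0\}$ and $\epsilon = 1/n-\gamma$; the symmetric difference is (up to translation) the pullback under $\chi_\alpha$ of the small arcs $[r\gamma, r/n)+\zz$. Each such pullback is an \emph{infinite} $C_\alpha$-convex subset of $\zz$ (the image of $\chi_\alpha$ is dense, so the preimage of any nondegenerate arc is infinite), it is not contained in any coset of $n\zz$ (an infinite $C_\alpha$-convex set meets every coset $k+n\zz$, by the same Kronecker argument as in Lemma~\ref{lem:kron}), and one of its two endpoints is the torsion point $r/n+\zz$, which, as you yourself observe, is never in the image of $\chi_\alpha$. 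Translating by $rl_0$ only moves the offending arc to $[0, r\epsilon)+\zz$, whose right endpoint $r/n - r\gamma +\zz$ is again not in the image of $\chi_\alpha$ (it differs from $r/n+\zz$ by an element of the image). So defining the correction set is precisely the problem the lemma asks you to solve: the argument is circular, and the claim that the discrepancy reduces to ``finite sets together with cosets mod $n$ intersected with $C_\alpha$-intervals'' is false. The digression through $C_{n\alpha}$ on the subgroup $n\zz$ does not rescue this, since multiplying by $n$ collapses all $n$ arcs $[r/n,(r+1)/n)$ to the whole circle and destroys the information about $r$.

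The missing idea is that one should not try to name the endpoints $r/n+\zz$ at all, but instead detect $r=\lfloor n\{\alpha l\}\rfloor$ as a \emph{winding number}: $\alpha l+\zz$ lies in $[r/n,(r+1)/n)+\zz$ if and only if the finite orbit $(\alpha i l+\zz)_{i=0}^{n}$ crosses $0+\zz$ exactly $r$ times, i.e.\ $C(0+\zz,\alpha(i+1)l+\zz,\alpha il+\zz)$ holds for exactly $r$ values of $i\in\{1,\dots,n-1\}$. Each of these crossing conditions is literally $C_\alpha(0,(i+1)l,il)$, so the target set is a parameter-free finite boolean combination of atomic formulas, with no approximation or correction terms needed.
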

\begin{proof}
Let the notation be as given and $( \rr \slash \zz; +, C)$ be the oriented circle. We have that $\alpha l + \zz $ is in $ [r/n, (r+1)/n) + \zz$ if and only if $( \alpha il + \zz)_{i=0}^{n}$ ``winds'' $r$ times around $\rr \slash \zz$, that is, 
$$C\big( 0 + \zz, \alpha (i+1) l + \zz, \alpha i l + \zz \big)  \text{ holds for exactly }  r \text{ values of } i\in \{1, \ldots, n-1  \}.$$ 
The desired conclusion follows.
\end{proof}

\begin{cor}\label{cor: lineardep}
If $\alpha$ and $\beta$ are in $\rr \setminus \qq$ and $\beta =\alpha + m\slash n$ with $n \geq 1$, then $C_\beta$ is definable in $(\mathbb{Z};+,C_\alpha)$.
\end{cor}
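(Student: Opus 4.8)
The plan is to reconstruct $C_\beta$ from $C_\alpha$ by recording, for each $l$, both the residue $l\bmod n$ and the length-$1\slash n$ arc of $\rr\slash\zz$ containing $\chi_\alpha(l)$. For $r\in\{0,\dots,n-1\}$ put $A_r=[r\slash n,(r+1)\slash n)+\zz$, so $\rr\slash\zz=A_0\sqcup\cdots\sqcup A_{n-1}$; by Lemma~\ref{lem:definabilityinterval} each set $\{l:\chi_\alpha(l)\in A_r\}$ is definable in $(\zz;+,C_\alpha)$. Write $q(l)$ for the index with $\chi_\alpha(l)\in A_{q(l)}$ and $g(l)\in[0,1\slash n)$ for the ``offset'', i.e.\ the representative of $\chi_\alpha(l)$ in $[0,1)$ is $q(l)\slash n+g(l)$. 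Note that $\chi_\beta(l)=\chi_\alpha(l)+\big((ml\slash n)+\zz\big)$, where $(ml\slash n)+\zz$ depends only on $l\bmod n$.

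The first step is to show that the order of the offsets is definable. For $j\neq k$ one has $\chi_\alpha(j-k)=\big(q(j)\slash n+g(j)\big)-\big(q(k)\slash n+g(k)\big)+\zz$, and since $g(j)-g(k)\in(-1\slash n,1\slash n)$ a short case check gives
\[
g(j)>g(k)\iff\chi_\alpha(j-k)\in A_{(q(j)-q(k))\bmod n},\qquad g(j)<g(k)\iff\chi_\alpha(j-k)\in A_{(q(j)-q(k)-1)\bmod n},
\]
while $g(j)=g(k)$ forces $\chi_\alpha(j-k)\in\tfrac{1}{n}\zz\slash\zz$, hence $j=k$ as $\alpha$ is irrational. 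Since $q(j)$, $q(k)$, and the arc of $\chi_\alpha(j-k)$ are all definable (the last again by Lemma~\ref{lem:definabilityinterval}, as $j-k$ is a group term), the relations ``$g(j)<g(k)$'', ``$g(j)=g(k)$'', ``$g(j)>g(k)$'' are definable in $(\zz;+,C_\alpha)$; consequently so is, for distinct $i,j,l$, the Boolean condition expressing that $g(i),g(j),g(l)$ occur in cyclic order.

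Next I assemble $C_\beta$. Partition $\zz^3$ into the $n^3$ pieces on which $(i\bmod n,j\bmod n,l\bmod n)$ is fixed; on such a piece $(mi\slash n)+\zz$, $(mj\slash n)+\zz$, $(ml\slash n)+\zz$ are constants $p_1\slash n+\zz,p_2\slash n+\zz,p_3\slash n+\zz$ with $p_s\in\{0,\dots,n-1\}$. Adding $p_s\slash n+\zz$ carries $A_r$ onto $A_{(r+p_s)\bmod n}$ and fixes offsets, so $\chi_\beta(i)\in A_{(q(i)+p_1)\bmod n}$ with offset $g(i)$, and likewise for $j$ and $l$. Hence, on this piece, $C_\beta(i,j,l)=C(\chi_\beta(i),\chi_\beta(j),\chi_\beta(l))$ is computed from definable data: if the three target arc indices $(q(i)+p_1),(q(j)+p_2),(q(l)+p_3)\bmod n$ are pairwise distinct the value is constant, determined by their cyclic order; if exactly two of them coincide then $C_\beta(i,j,l)$ reduces to a single comparison between the offsets of those two indices; if all three coincide it is the condition that $g(i),g(j),g(l)$ occur in cyclic order. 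Each of these is definable in $(\zz;+,C_\alpha)$ by the previous step, so $C_\beta$ is definable. The finitely many tuples with a coincidence among $i,j,l$, on which $C_\beta$ is false, do not affect this.

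I expect the main obstacle to be the bookkeeping in the last step: keeping the orientation of $\rr\slash\zz$ straight so that ``$\chi_\beta(i)$ precedes $\chi_\beta(j)$ in the shared arc'' is translated into the correct inequality between $g(i)$ and $g(j)$ in each sub-case, and carefully handling the boundary and coincidence edge cases in the offset computation (in particular using the irrationality of $\alpha$ to rule out accidental equalities). The one genuinely new point is the first step's observation that comparing the offsets $g(j)$ and $g(k)$ amounts to locating the single point $\chi_\alpha(j-k)$ among the length-$1\slash n$ arcs, which reduces an order question to an instance of Lemma~\ref{lem:definabilityinterval}; the rest is a finite case analysis.
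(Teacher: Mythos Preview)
Your argument is correct and takes a genuinely different route from the paper's.  The paper first reduces to the special case $\beta=\alpha+1/n$ (and then iterates $m$ times); in that case it handles the ``same arc'' situation by multiplying by $n$: since $n\beta=n\alpha+1$, the within-arc comparison of $\chi_\beta(k)$ and $\chi_\beta(l)$ collapses to $C_\alpha(0,nk,nl)$.  You instead treat an arbitrary $m/n$ in one shot, and your key device is different: you show that the order of the offsets $g(j),g(k)$ is recoverable from the arc containing $\chi_\alpha(j-k)$ together with the arc indices $q(j),q(k)$.  This is a nice observation and is strictly more flexible than the paper's multiply-by-$n$ trick, since it gives a definable offset comparison for arbitrary pairs, not just pairs compared against $0$; the price is a somewhat heavier case analysis at the end, whereas the paper's reduction lets it deal with only two entries $k,l$ and a fixed basepoint $0$.

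Two small points of wording.  First, when you write ``the value is constant'' in the pairwise-distinct case, you presumably mean constant once the three target arc indices are also fixed (so really an $n^6$-fold partition, or equivalently a definable function of the finitely-valued definable data $q(i),q(j),q(l)$); as stated, the $n^3$-fold partition by residues alone does not make the value constant.  Second, ``the finitely many tuples with a coincidence among $i,j,l$'' is not literally correct---there are infinitely many such tuples---but the set $\{(i,j,l):i=j\lor j=l\lor i=l\}$ is definable and $C_\beta$ vanishes there, so this causes no trouble.
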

\begin{proof}
Suppose $\alpha$ is in $\rr \setminus \qq$. Note that $C_{-\alpha}(j,k,l)$ if and only if $C_{\alpha}(-j,-k,-l)$, so $C_{-\alpha}$ is definable in $(\zz;+,C_\alpha)$.
As $\alpha - m \slash n = - (-\alpha + m\slash n)$ is suffices to treat the case when $m \geq 1$.
It suffices to treat the case $\beta = \alpha + 1 \slash n$ and then apply this case $m$ times to get the general case.

Suppose $\alpha, \beta$ are in $\rr \setminus \qq$ and $\beta =\alpha + 1/n$ with $n \geq 1$. As $C_\alpha$ is additive it suffices to show that the set of pairs $(k, l)$ such that $C_\beta( 0, k, l)$ is definable in $(\mathbb{Z};+,C_\alpha)$. Let $( \rr \slash \zz; +, C)$ be the positively oriented circle.  By definition,  $C_\beta( 0, k, l)$ is equivalent to  $C( 0+ \zz, \beta k +\zz , \beta l + \zz)$. The latter holds if and only if either there are $r, s \in \{0, \ldots, n-1\} \text{ with }r<s$ such that
$$ \beta k + \zz \in [r/n, (r+1)/n)+\zz \  \text{ and } \  \beta l +\zz \in [s/n,(s+1)/n) +\zz $$  or there is $r \in \{0, \ldots, n-1\}$ such that
$$\beta k +\zz, \beta l +\zz \in [r/n, (r+1)/n) + \zz  \text{ and } C\big(0, \beta nk +\zz, \beta n l +\zz \big) .$$ 
For all $a \in \rr$, we have that  $a + k \slash n + \zz \in [r/n,  (r+1)/n)+ \zz $ holds if an only if $a $ is in $[r'/n,  (r'+1)/n)+ \zz$ with  $r' \in \{0, \ldots, n-1\} \text{ and  } r'+k \equiv r \pmod n$. Hence, it follows from $\beta = \alpha+1\slash n$ that $\beta k  + \zz \in [r/n,  (r+1)/n) + \zz $ is equivalent to  $$ \alpha k + \zz \in [r'/n, (r'+1)/n) + \zz \ \text{ with } r' \in \{0, \ldots, n-1\} \text{ and  } r'+k \equiv r \pmod n.$$ On the other hand, as $n\beta =n\alpha +1$, so we get  $$C(0 + \zz, \beta nk +\zz, \beta nl +\zz) \  \text{ is equivalent to }\ C(0 + \zz, \alpha nk +\zz, \alpha nl +\zz).$$ By definition of $C_\alpha$, the latter holds if and only if $C_{\alpha}(0, nk, nl)$. Combining with Lemma \ref{lem:definabilityinterval} we get the desired conclusion.
\end{proof}

\begin{lem} \label{lem:definabilityinterval2} 
Suppose $\alpha$ is in $[0, 1)_{\rr \setminus \qq}$, $m,n$ are in $\nn^{\geq 1}$, and $r$ is in $\{0, \ldots, n-1\}$. Then the set $$  \big\{ l \ :\  \alpha l +\zz \in [0, r\alpha\slash n) +\zz \big\}$$ is definable in $( \zz; +, C_\alpha)$.
\end{lem}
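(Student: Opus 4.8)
We need to show that the set $\{\, l : \alpha l + \zz \in [0, r\alpha/n) + \zz \,\}$ is definable in $(\zz; +, C_\alpha)$. The idea is to express the condition ``$\alpha l + \zz$ lies in the arc from $0$ to $r\alpha/n$'' purely in terms of the cyclic order $C_\alpha$ applied to integer points, exploiting that $r\alpha/n$ is $\zz$-translate-equivalent to $r\alpha/n$ and that we can detect ``winding'' using finitely many multiples of $l$, much as in Lemma~\ref{lem:definabilityinterval}. First I would reduce to the problem of pinning down the position of $\alpha l + \zz$ relative to the finitely many points $\alpha i/n + \zz$ (equivalently, relative to the points $\{\, \alpha i + \zz : 0 \le i \le r\,\}$ rescaled), since $[0, r\alpha/n)$ is the union of the arcs $[i\alpha/n, (i+1)\alpha/n)$ for $0 \le i < r$.

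Next I would use the trick of passing to multiples. Note $\alpha l + \zz \in [0, r\alpha/n) + \zz$ is controlled by where the tuple $(\alpha j l + \zz)_j$ sits for suitable $j$; in particular, multiplying by $n$ sends the endpoints $i\alpha/n + \zz$ to $i\alpha + \zz$, which are of the form $\alpha \cdot(\text{integer}) + \zz$ and hence are directly visible to $C_\alpha$ via $C_\alpha(0, i, \cdot)$. So I would first characterize, for $j$ with $0 \le j < n$, the statement ``$\alpha l + \zz \in [j\alpha/n, (j+1)\alpha/n) + \zz$'' by combining: (a) the condition $C(0+\zz, \alpha n l + \zz, \ldots)$ which after rescaling becomes a condition $C_\alpha(0, nl, \cdot)$ placing $\alpha n l + \zz$ in the arc $[j\alpha, (j+1)\alpha) + \zz = [0,\alpha)+\zz$ shifted, together with (b) a ``winding count'' as in Lemma~\ref{lem:definabilityinterval} recording how many times the sequence $(\alpha i l + \zz)_{i=0}^{n}$ wraps around, which selects the correct residue $j$. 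Then the desired set is the disjoint union over $0 \le j < r$ of these definable pieces, plus the one-point or boundary adjustments.

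Concretely, I expect the argument to run: fix $l$; the point $\alpha l + \zz$ lies in $[0, \alpha) + \zz$, or in $[0,\alpha)+\zz$ after subtracting a multiple of $\alpha$, precisely when suitable $C_\alpha(0, kl, (k+1)l)$-type relations hold — this is the ``integer part in base $\alpha$'' computation. Having located $\alpha l + \zz$ in some fundamental arc $[i\alpha, (i+1)\alpha) + \zz$, subdivide that arc into $n$ equal sub-arcs; membership of $\alpha l+\zz$ in the $j$-th sub-arc $[i\alpha + j\alpha/n, i\alpha + (j+1)\alpha/n)+\zz$ is detected by testing where $\alpha n l + \zz$ sits, via $C_\alpha(0, nl, \cdot)$ and Lemma~\ref{lem:definabilityinterval}'s winding idea applied to $(\alpha i (nl) + \zz)$. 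Assembling the finitely many cases $0 \le j < r$ yields a first-order formula in $(\zz; +, C_\alpha)$ for the target set.

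The main obstacle is bookkeeping rather than conceptual: correctly handling the interaction between the ``integer-part'' decomposition of $\alpha l$ and the $n$-fold subdivision, so that the winding counts and the $C_\alpha(0, nl, \cdot)$ conditions are assembled into a single formula whose solution set is exactly $[0, r\alpha/n)+\zz$ and not, say, $[0, r\alpha/n)+\zz$ shifted by the sub-arc's basepoint. I would isolate this in a preliminary step by first proving definability of each arc $[i\alpha + j\alpha/n,\, i\alpha + (j+1)\alpha/n)+\zz$ with the endpoints expressed as $\zz$-translates of $\frac{1}{n}(\text{integer})\cdot\alpha$, reducing to the combination of Lemma~\ref{lem:definabilityinterval} (applied with the multiplier $n$) and the winding characterization; then the final union is immediate.
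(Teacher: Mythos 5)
Your two main tools --- reading off the position of $\alpha n l+\zz$ via relations of the form $C_\alpha(j,nl,j+1)$, and the winding count of $(\alpha i l+\zz)_{i=0}^{n}$ from Lemma~\ref{lem:definabilityinterval} --- are the right ones, but the way you assemble them has a genuine gap. You propose to write $[0,r\alpha/n)+\zz$ as the union of the sub-arcs $[j\alpha/n,(j+1)\alpha/n)+\zz$ for $0\le j<r$ and to define each one by conjoining (a) ``$\alpha nl+\zz\in[j\alpha,(j+1)\alpha)+\zz$'' with (b) a winding-count condition, which by Lemma~\ref{lem:definabilityinterval} amounts to ``$\alpha l+\zz\in[w/n,(w+1)/n)+\zz$''. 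The set cut out by (a) is the full preimage of $[j\alpha,(j+1)\alpha)+\zz$ under multiplication by $n$, namely the $n$ disjoint arcs $[j\alpha/n+s/n,\,(j+1)\alpha/n+s/n)+\zz$ for $0\le s<n$, each of length $\alpha/n$ and spaced $1/n$ apart; condition (b) intersects this with an interval of length exactly $1/n$. Whenever the target component ($s=0$) straddles a point of the form $w/n$ --- which happens precisely when there is an integer strictly between $j\alpha$ and $(j+1)\alpha$, e.g.\ for any $\alpha\in(1/2,1)$ and $n\ge 3$ the sub-arc $[\alpha/n,2\alpha/n)$ needed for $r=2$ straddles $1/n$ --- no single choice of $w$ captures it, and taking the union over two values of $w$ lets in pieces of a neighbouring component. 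So for $j\ge 1$ the conjunction of (a) and (b) is in general neither contained in nor equal to the sub-arc you want. (For $j=0$ your construction is sound: the no-winding condition forces the representative of $\alpha l+\zz$ in $[0,1)$ to be less than $1/n$, which isolates the component containing $0+\zz$; this is exactly the base case $r=1$.)

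The missing idea is that one should only ever define arcs anchored at $0+\zz$, and pass from $[0,\alpha/n)+\zz$ to $[0,r\alpha/n)+\zz$ not by translating sub-arcs but by an existential quantifier over group elements: $\alpha l+\zz\in[0,r\alpha/n)+\zz$ if and only if $l=0$ or there is $k$ with $\alpha k+\zz\in[0,\alpha/n)+\zz$ and $C_\alpha(0,l,rk)$. This works because the points $\alpha k+\zz$ lying in $[0,\alpha/n)+\zz$ are dense there, so the points $r\alpha k+\zz$ sweep out a dense subset of $[0,r\alpha/n)+\zz$, and the supremum of the conditions $C_\alpha(0,l,rk)$ over such $k$ recovers the whole half-open arc. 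Your decomposition into translated sub-arcs forces you to define non-anchored arcs first, which is exactly where the obstruction lies; note that defining $[j\alpha/n,(j+1)\alpha/n)+\zz$ is essentially equivalent to defining both $[0,j\alpha/n)+\zz$ and $[0,(j+1)\alpha/n)+\zz$, i.e.\ to the lemma itself, so your reduction runs in the wrong direction.
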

\begin{proof}
 Suppose $\alpha$,$n$, and $r$ are as given and $( \rr \slash \zz; +, C)$ is the positively oriented circle. We note that $\alpha l + \zz$ is in $[0, \alpha\slash n) +\zz$ if and only if $C( 0 + \zz, \alpha nl + \zz , \alpha + \zz)$ and $( \alpha il + \zz)_{i=0}^{n}$ does not``winds''  around $\rr \slash \zz$, that is, $$C( 0 +\zz, \alpha il +\zz , \alpha (i+1)l +\zz) \text{ for all } i \in \{1, \ldots, n-1\}.$$ Recall that by definition $C(\alpha j +\zz, \alpha k+\zz, \alpha l+\zz)$  if and only if  $C_\alpha(j, k, l)$. Hence, $$ \big\{ l \ :\  \alpha l +\zz \in [0, \alpha\slash n) +\zz \big\} \text{ is definable in } ( \zz; +, C_\alpha).$$ The conclusion follow the easy observation that $\alpha l +\zz$ is in $ [0, r\alpha\slash n)+\zz $ if and only if $C_\alpha( 0, l, rk)$ for some $ k \in [0, \alpha\slash n) + \zz$. 
\end{proof}

\begin{cor}\label{cor:lineardep2}
Suppose $\alpha$ is in $[0, 1)_{\rr \setminus \qq}$, $n$ is in $\nn^{\geq 1}$, and $\beta = m\alpha \slash n$. Then $C_\beta$ is definable in  $( \zz; +, C_\alpha)$.
\end{cor}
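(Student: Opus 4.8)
The plan is to reduce the statement to the two building blocks already established, namely Corollary~\ref{cor: lineardep} (which handles shifts by rationals, $\beta = \alpha + m/n$) and the interval-definability results Lemma~\ref{lem:definabilityinterval} and Lemma~\ref{lem:definabilityinterval2}. Since $C_\beta$ is additive, it suffices to show that the ternary relation $(j,k,l) \mapsto C_\beta(0,k,l)$ — equivalently, by definition, $C(0+\zz, \beta k + \zz, \beta l + \zz)$ in the positively oriented circle $(\rr/\zz; +, C)$ — is definable in $(\zz;+,C_\alpha)$. We may also assume $m \geq 1$, since $C_{-\beta}$ is interdefinable with $C_\beta$ (using $C_{-\beta}(j,k,l) \iff C_\beta(-j,-k,-l)$), and then, after possibly replacing $\beta$ by $\beta + $ an integer, which does not change $C_\beta$, we may assume $\beta = m\alpha/n$ with $0 < m\alpha/n$; in fact by first reducing $m$ modulo $n$ via Corollary~\ref{cor: lineardep} (a shift of $m\alpha/n$ by an integer multiple of $\alpha/\gcd$-type argument, or more simply by noting $C_\beta = C_{\beta'}$ whenever $\beta - \beta' \in \zz$) we may assume $0 \le m < n$, so $\beta \in [0, \alpha) + \zz$.

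The key step is an "address plus winding number" decomposition exactly parallel to the proof of Corollary~\ref{cor: lineardep}. Partition $\rr/\zz$ into the $n$ half-open arcs $[r\alpha/n, (r+1)\alpha/n) + \zz$ for $r \in \{0,\dots,n-1\}$ together with the remaining arc $[\alpha, 1) + \zz$ — wait, more precisely one uses the arcs $[r\beta, (r+1)\beta)$ is not quite a partition, so instead one works directly with the map $l \mapsto $ (which of the arcs $[r\alpha/n,(r+1)\alpha/n)+\zz$, $r = 0,\dots,n-1$, plus the complementary arc, contains $\beta l + \zz = m\alpha l/n + \zz$). By Lemma~\ref{lem:definabilityinterval2}, for each fixed $r$ the set $\{ l : \alpha l + \zz \in [0, r\alpha/n) + \zz\}$ is definable in $(\zz;+,C_\alpha)$, and hence so is $\{l : \alpha l + \zz \in [r\alpha/n, (r+1)\alpha/n) + \zz\}$ by taking a difference; replacing $l$ by $ml$ (allowed since $C_\alpha$ is additive, so the unary set $\{l : \alpha (ml) + \zz \in [r\alpha/n,(r+1)\alpha/n)+\zz\}$ is definable) we obtain that the "address" of $\beta l + \zz$ among these arcs is a $(\zz;+,C_\alpha)$-definable function of $l$. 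Given the addresses $r(k)$ of $\beta k + \zz$ and $r(l)$ of $\beta l + \zz$: if $r(k) < r(l)$ then $C(0+\zz, \beta k + \zz, \beta l + \zz)$ holds automatically, if $r(k) > r(l)$ it fails, and if $r(k) = r(l)$ it is decided by the finer comparison $C(0 + \zz, \beta n k + \zz, \beta n l + \zz)$, where now $\beta n = m\alpha$, so this last relation equals $C(0+\zz, \alpha(mk) + \zz, \alpha(ml)+\zz)$, which by definition of $C_\alpha$ is just $C_\alpha(0, mk, ml)$ — manifestly definable in $(\zz;+,C_\alpha)$. Assembling these finitely many cases by a boolean combination gives a $(\zz;+,C_\alpha)$-formula defining $C_\beta(0,k,l)$, and then additivity upgrades this to a definition of the full relation $C_\beta$.

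The main obstacle I expect is bookkeeping the arc partition correctly: the $n$ arcs $[r\alpha/n,(r+1)\alpha/n)+\zz$ for $r=0,\dots,n-1$ do not cover all of $\rr/\zz$ (they only cover $[0,\alpha)+\zz$), and since $\beta l + \zz$ ranges over the subgroup generated by $m\alpha/n + \zz$, which need not lie inside $[0,\alpha)+\zz$, one must either enlarge the partition to a genuine finite partition of the circle into $C_\alpha$-definable arcs (e.g. the arcs determined by the points $\{ r\alpha/n + \zz : r \in \zz, 0 \le r\alpha/n < 1\}$, which are finitely many and each is of the form handled by Lemma~\ref{lem:definabilityinterval2} after a suitable translation by a multiple of $\alpha$, itself absorbed by additivity) or reduce to the in-range case beforehand. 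Once the partition is set up so that each cell is $C_\alpha$-definable, the case analysis above goes through verbatim; verifying that the "refined comparison" $C(0+\zz, \beta nk + \zz, \beta nl + \zz)$ correctly breaks ties within a single cell is the only other point requiring a short argument, and it follows from the fact that an arc of length $\alpha/n$ is mapped homeomorphically onto an arc of length $\alpha$ by multiplication by $n$, so the cyclic order is preserved. These are routine, so I would not expect a genuine difficulty — the content is entirely in Lemmas~\ref{lem:definabilityinterval} and \ref{lem:definabilityinterval2}, which are already in hand.
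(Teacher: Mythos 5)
There is a genuine gap at the ``address'' step. Writing $\beta = m\alpha/n$, the point you need to locate on the circle is $\chi_\beta(l) = (m\alpha/n)l + \zz$, but the set you actually prove definable by ``replacing $l$ by $ml$'' in Lemma~\ref{lem:definabilityinterval2} is $\{l : \alpha(ml)+\zz \in [r\alpha/n,(r+1)\alpha/n)+\zz\}$, which records the position of $\chi_\alpha(ml) = n\cdot\chi_\beta(l)$, not of $\chi_\beta(l)$. Since multiplication by $n$ on $\rr/\zz$ is $n$-to-one, knowing which arc contains $n\chi_\beta(l)$ does not determine which arc contains $\chi_\beta(l)$; and recovering the position of $\chi_\beta(l)$ from $C_\alpha$-data is precisely the content of the corollary, so this step cannot be waved through. (A secondary, fixable problem: even within a single arc $[r\alpha/n,(r+1)\alpha/n)+\zz$, the proposed tie-breaker $C(0+\zz,\beta nk+\zz,\beta nl+\zz)$ is wrong whenever the image arc $[r\alpha,(r+1)\alpha)+\zz$ wraps past $0+\zz$: the images could be $0.95+\zz$ and $0.1+\zz$ with the first preceding the second inside that arc while $C(0+\zz,0.95+\zz,0.1+\zz)$ fails. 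The correct tie-breaker is $C(r\alpha+\zz,\cdot,\cdot)$, i.e.\ $C_\alpha(r,mk,ml)$, not $C_\alpha(0,mk,ml)$.)

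The paper circumvents the location problem entirely. After reducing to $\beta=\alpha/n$ via $C_{m\alpha/n}(i,j,l)\Leftrightarrow C_{\alpha/n}(mi,mj,ml)$, it writes an arbitrary argument as $j=kn+r$ with $0\le r<n$, so that $\chi_\beta(j)=k\alpha+r\alpha/n+\zz$, and encodes this point not by an address in a finite partition but by the uniformly definable set $X_{k,r}=\{l : \alpha l+\zz\in[0,k\alpha+r\alpha/n)+\zz\}$, the $\chi_\alpha$-pullback of the initial arc ending at $\chi_\beta(j)$. Two points are then compared by the first-order condition $X_{k,r}\subseteq X_{l,s}$, which (by density of $\chi_\alpha(\zz)$) is equivalent to containment of the initial arcs and hence to $C(0+\zz,\chi_\beta(kn+r),\chi_\beta(ln+s))$. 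To rescue your partition-based outline you would have to definably select, for each $l$, which of the $n$ preimages of $\chi_\alpha(ml)$ under multiplication by $n$ equals $\chi_\beta(l)$, and I do not see how to do that without reintroducing the paper's device.
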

\begin{proof}
As $\chi_{\alpha\slash n}(m k) = \chi_{m\alpha\slash n}(k)$ for all $k$ we have $C_{m\alpha/n}(i,j,l)$ if and only if $C_{\alpha \slash n}(mi,mj,ml)$.
It therefore suffices to treat the case $\beta = \alpha\slash n$.
For any given $k$ and $r \in \{0, 1, \ldots, n\}$, let $$ X_{k,r} = \{ l :  \alpha l +\zz \in [0, k\alpha + r\alpha\slash n) + \zz \}.$$ 
We first prove that $X_{k,r}$ is definable in $( \zz; +, C_\alpha)$ for all $k$ and $r$ as above. This is true for $r=0$ as $l \in X_{k,0}$ if and only if either $l =0$ or $C(0 +\zz, l \alpha +\zz, k \alpha +\zz)$. The later is equivalent to $C_\alpha( 0, k, l)$ by definition. The case where $k =0$ is just the preceding lemma. In general, we have that 
$$X_{k,r} = 
\begin{cases}
X_{k, 0} \cup (k+X_{0, r} )& \text{ if } X_{k, 0} \cap  (k+X_{0, r} ) =\emptyset ,\\
X_{k, 0} \cap  (k+X_{0, r} ) & \text{ otherwise}.
\end{cases}$$ 
Let   $r, s $ be in $\{0, \ldots, n-1\}$. We have that $C_\beta( 0, kn+ r, ln+s)$ is equivalent to $C( 0 +\zz; \beta(kn+r) + \zz, \beta(ln+s) +\zz)$ by definition. The latter holds if and only if $kn+ r$, $ln+ s$, and $0$ are all distinct and $ X_{k,r} \subseteq X_{l,r}$. The conclusion follows.
\end{proof}

\noindent Corollary~\ref{cor: lineardep} and Corollary~\ref{cor:lineardep2} show that $C_\beta$ is definable in $(\zz;+,C_\alpha)$ whenever $\alpha,\beta \in \rr \setminus \qq$ are $\qq$-linearly dependent.
Combining with Corollary~\ref{cor: linearindep} we get:

\begin{thm}\label{thm:diff}
Suppose $\alpha$ and $\beta$ are in $\rr\setminus \qq$.
Then $(\zz;+,C_\alpha)$ and $(\zz;+,C_\beta)$ are definably equivalent if and only if $\alpha,\beta$ are $\qq$-linearly dependent.
\end{thm}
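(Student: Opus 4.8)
The statement to be proved is Theorem~\ref{thm:diff}: for $\alpha,\beta \in \rr \setminus \qq$, the structures $(\zz;+,C_\alpha)$ and $(\zz;+,C_\beta)$ are definably equivalent if and only if $\alpha$ and $\beta$ are $\qq$-linearly dependent. The plan is to combine the three earlier corollaries, which together cover both directions, so the proof is essentially an assembly argument with one small reduction.

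For the forward direction I would argue by contraposition: suppose $\alpha$ and $\beta$ are $\qq$-linearly \emph{independent}; then Corollary~\ref{cor: linearindep} directly produces a $(\zz;+,C_\beta)$-definable subset of $\zz$ that is not definable in $(\zz;+,C_\alpha)$, so the two structures are not definably equivalent. (By symmetry of the hypothesis one could equally invoke the corollary with the roles of $\alpha$ and $\beta$ swapped, but one direction already suffices to break definable equivalence.) For the backward direction, assume $\alpha$ and $\beta$ are $\qq$-linearly dependent, so $\beta = q\alpha + r$ for some $q,r \in \qq$; writing $q = s/t$ and $r = m/n$ in lowest terms, I would factor the passage from $\alpha$ to $\beta$ through intermediate irrationals: first pass from $\alpha$ to $\alpha/t$ (or more precisely to $s\alpha/t$) using Corollary~\ref{cor:lineardep2}, which handles multiplication of the irrational by a positive rational, and then add the rational $m/n$ using Corollary~\ref{cor: lineardep}. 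Since "$C_\gamma$ definable in $(\zz;+,C_\alpha)$" is transitive and, by $C_\alpha$-additivity together with Proposition~\ref{Prop: Cyclicorderclassification}, unaffected by adding integers to $\gamma$, chaining these two corollaries shows $C_\beta$ is definable in $(\zz;+,C_\alpha)$; by the symmetric argument $C_\alpha$ is definable in $(\zz;+,C_\beta)$, giving definable equivalence.

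I expect no real obstacle here — all the analytic and combinatorial work (the use of Kronecker's theorem, the winding-number descriptions of intervals) has already been carried out in the lemmas and corollaries. The only point requiring a little care is the bookkeeping in the reduction $\beta = q\alpha + r$: one must make sure the intermediate irrationals $\alpha/t$, $s\alpha/t$, $s\alpha/t + m/n$ are all genuinely irrational (which is automatic since $\alpha \notin \qq$ and the operations are rational-affine with nonzero linear coefficient), and one must check that Corollary~\ref{cor:lineardep2}, stated for $\alpha \in [0,1)_{\rr\setminus\qq}$, can be applied after first normalizing $\alpha$ modulo $1$ — which is legitimate because $C_\alpha = C_{\alpha+1}$ by Proposition~\ref{Prop: Cyclicorderclassification}. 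With that normalization in place the proof is a two-line citation of the three corollaries.
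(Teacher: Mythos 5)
Your proof is correct and takes essentially the same route as the paper, which likewise derives the backward direction by combining Corollary~\ref{cor: lineardep} and Corollary~\ref{cor:lineardep2} and the forward direction from Corollary~\ref{cor: linearindep}. Your additional bookkeeping (normalizing $\alpha$ modulo $1$, checking the intermediate values stay irrational) is sound and, if anything, more careful than the paper's one-line assembly of the three corollaries.
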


\noindent  Finally , we give an example of a dp-minimal expansion of $(\zz;+)$ which defines uncountably many subsets of $\zz$.
Let $\mathscr{M} = (M,\ldots)$ be a structure and $\mathscr{N} = (N,\ldots)$ be a highly saturated elementary expansion of $\mathscr{M}$.
Then a subset of $M^k$ is \textbf{externally definable} if it is of the form $A \cap M^k \text{ where } A \subseteq N^k \text{ is definable in }\mathscr{N}. $
A standard saturation argument shows that the collection of externally definable sets does not depend on the choice of $\mathscr{N}$.
The \textbf{Shelah expansion} of $\mathscr{M}$ is the expansion $\mathscr{M}^{\mathrm{Sh}}$ of $\mathscr{M}$ obtained by adding a predicate defining every externally definable subset of every $M^k$.
It was shown in \cite{Shelah} that $\mathscr{M}^{\mathrm{Sh}}$ is NIP whenever $\mathscr{M}$ is, see also \cite[Chapter 3]{simon-book}.
It was observed in \cite[3.8]{OnUs} that the main theorem of \cite{Shelah} also shows that $\mathscr{M}^\mathrm{Sh}$ is dp-minimal whenever $\mathscr{M}$ is dp-minimal.
In particular $(\zz;+,C_\alpha)^\mathrm{Sh}$ is dp-minimal for any $\alpha \in \rr \setminus \qq$.

\begin{prop}
Fix $\alpha \in \rr \setminus \qq$.
Then $(\zz;+,C_\alpha)^\mathrm{Sh}$ defines uncountably many distinct subsets of $\zz$ and has uncountably many definably distinct reducts.
\end{prop}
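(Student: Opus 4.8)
The plan is to establish both assertions using, for each $\gamma\in(0,1)$, the set
$$X_\gamma \ :=\ \{\, l\in\zz \ :\ \alpha l+\zz\in[0,\gamma)_{\rr\slash\zz}\,\},$$
where $(\rr\slash\zz;+,C)$ is the positively oriented circle, so that $X_\gamma=\{l:C(0+\zz,\chi_\alpha(l),\gamma+\zz)\}\cup\{0\}$. First I would show every $X_\gamma$ is externally definable in $(\zz;+,C_\alpha)$. Pass to a $|\zz|^{+}$-saturated $\mathscr{N}\succ(\zz;+,C_\alpha)$ and consider the partial type over $\zz$ asserting $C(0,l,x)$ for all $l\in X_\gamma\setminus\{0\}$, asserting $C(0,x,l)$ for all $l\in\zz\setminus X_\gamma$, and asserting $x\neq l$ for all $l$. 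Given finitely many such $l$, the one-variable case of Kronecker's theorem gives an integer $l^{*}$ with $\alpha l^{*}+\zz$ in the nonempty open arc lying strictly between the finitely many relevant $\alpha l+\zz$ and $\gamma+\zz$, and $l^{*}$ realizes the fragment; so the type is finitely satisfiable, hence realized by some $b\in N$, and then $X_\gamma=\{l\in\zz:\mathscr{N}\models C(0,l,b)\}\cup\{0\}$ is externally definable, hence $(\zz;+,C_\alpha)^{\mathrm{Sh}}$-definable. Moreover $\gamma\mapsto X_\gamma$ is injective: if $\gamma<\gamma'$, Kronecker again gives $l$ with $\alpha l+\zz\in[\gamma,\gamma')_{\rr\slash\zz}$, so $l\in X_{\gamma'}\setminus X_\gamma$. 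As $(0,1)$ is uncountable, $(\zz;+,C_\alpha)^{\mathrm{Sh}}$ defines uncountably many subsets of $\zz$.

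For the reducts, fix $B\subseteq(0,1)\cap(\rr\setminus\qq)$ of size continuum with $B\cup\{1,\alpha\}$ $\qq$-linearly independent (extend $\{1,\alpha\}$ to a Hamel basis of $\rr$ over $\qq$ and rescale the remaining basis elements into $(0,1)$). For $\beta\in B$ let $\mathscr{R}_\beta$ be the reduct of $(\zz;+,C_\alpha)^{\mathrm{Sh}}$ generated by $(\zz;+,C_\alpha)$ together with the predicates $X_{\{k\beta\}}$ for all $k\geq1$ (here $\{k\beta\}$ is the fractional part); these are externally definable by the previous paragraph, so $\mathscr{R}_\beta$ is a genuine reduct. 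I claim the $\mathscr{R}_\beta$ ($\beta\in B$) are pairwise definably distinct; since $\mathscr{R}_\beta$ defines $X_\beta=X_{\{1\cdot\beta\}}$, it suffices to show that $X_\beta$ is not $\mathscr{R}_{\beta'}$-definable whenever $\beta\neq\beta'$ in $B$.

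To this end I would characterize the unary definable sets of $\mathscr{R}_{\beta'}$ by passing through the universal cover $(\zz^{2};u,+,<_\alpha)\cong(\zz+\alpha\zz;1,+,<)$ as in Lemma~\ref{prop:cover}: inside this ordered group the copy of $(\zz;+,C_\alpha)$ is induced on $[0,u)$, and the predicates $X_{\{k\beta'\}}$ become the bounded sets $[0,\{k\beta'\})$, equivalently (modulo $<$) the cut predicates $P_{\{k\beta'\}}:=\{x\in\zz+\alpha\zz:x<\{k\beta'\}\}$. A Weispfenning-style quantifier elimination for $(\zz+\alpha\zz;1,+,-,0,<)$ expanded by the congruences $\equiv_{n}$ and the family $(P_{\{k\beta'\}})_{k\geq1}$ goes through: eliminating $\exists x$ from a conjunction of atoms, $x$ gets constrained to an interval whose endpoints are $\zz$-linear in the parameters up to division by an integer and up to adding a term $\{k\beta'\}$, plus finitely many congruence conditions, and such an $x\in\zz+\alpha\zz$ exists iff the interval is nonempty, because every coset of $n(\zz+\alpha\zz)$ is dense in $\rr$; the remaining endpoint comparisons reduce to atoms of the language (cross-multiplying a comparison of two ``$\{k\beta'\}$-type'' endpoints yields a condition comparing some $z\in\zz+\alpha\zz$ with $j\beta'$, $j\in\zz$, and $z<j\beta'$ is $P_{\{j\beta'\}}(z-\lfloor j\beta'\rfloor)$). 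Hence every definable subset of $\zz+\alpha\zz$ is a finite union of sets $I\cap(a+n(\zz+\alpha\zz))$ with $I$ an interval whose finite endpoints lie in $\qq+\qq\alpha+\qq\beta'$; translating back, every unary $\mathscr{R}_{\beta'}$-definable set is a finite union of sets $D_{J}\cap(a+n\zz)$, where $D_{J}:=\{l:\alpha l+\zz\in J\}$ and $J\subseteq\rr\slash\zz$ is an arc with endpoints in $\Lambda_{\beta'}:=(\qq+\qq\alpha+\qq\beta')/\zz$. Since $\{1,\alpha,\beta,\beta'\}$ is $\qq$-linearly independent, $\beta+\zz\notin\Lambda_{\beta'}$. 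If $X_\beta=D_{[0,\beta)}$ were such a finite union $\bigcup_{i=1}^{m}D_{J_{i}}\cap(a_{i}+n_{i}\zz)$, choose $l_{k}\in X_\beta$ with $\alpha l_{k}+\zz$ approaching $\beta+\zz$ from within $[0,\beta)$; by pigeonhole some $J_{i_{0}}$ contains $\alpha l_{k}+\zz$ for infinitely many $k$, so $\beta+\zz$ lies in the closure, hence---not being an endpoint---in the interior of $J_{i_{0}}$; but $\{\alpha l+\zz:l\in a_{i_{0}}+n_{i_{0}}\zz\}$ is dense in $\rr\slash\zz$, so some $l^{*}\in a_{i_{0}}+n_{i_{0}}\zz$ has $\alpha l^{*}+\zz\in J_{i_{0}}$ just above $\beta+\zz$, whence $l^{*}\in D_{J_{i_{0}}}\cap(a_{i_{0}}+n_{i_{0}}\zz)\subseteq X_\beta$ while $\alpha l^{*}+\zz\notin[0,\beta)_{\rr\slash\zz}$, a contradiction. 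So $X_\beta$ is not $\mathscr{R}_{\beta'}$-definable, the reducts $\mathscr{R}_\beta$ ($\beta\in B$) are pairwise definably distinct, and $(\zz;+,C_\alpha)^{\mathrm{Sh}}$ has continuum-many definably distinct reducts.

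The main obstacle is the quantifier elimination of the third paragraph: controlling the definable sets of the cyclically ordered group $(\zz;+,C_\alpha)$ expanded by a family of external cut predicates. The delicate points are that a single cut $X_{\beta'}$ does not eliminate quantifiers, forcing one to adjoin the cuts at all integer multiples of $\beta'$, and that one must then check that no further sets become definable, so that the endpoints of the definable arcs stay inside $\qq+\qq\alpha+\qq\beta'$.
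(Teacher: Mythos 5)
Your proof of the first assertion is correct and is essentially the paper's argument made explicit: the paper simply notes that every $C_\alpha$-convex subset of $\zz$ is of the form $I\cap\zz$ for an interval $I$ in a saturated elementary extension, hence externally definable, and that there are uncountably many such convex sets; your saturation argument realizing the cut type at $\gamma$ is a correct unwinding of exactly this, and the injectivity of $\gamma\mapsto X_\gamma$ via density of $(\alpha l+\zz)_l$ is fine.

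For the second assertion you have taken a genuinely different and vastly more elaborate route, and its central step is a real gap: the Weispfenning-style quantifier elimination for $(\zz+\alpha\zz;1,+,-,0,<)$ expanded by the congruences and the whole family of cut predicates $(P_{\{k\beta'\}})_{k\geq 1}$ is asserted and sketched, not proved. You yourself flag it as ``the main obstacle,'' and it is a theorem-sized claim: one must verify that adjoining the cuts at all $\zz$-translates of all integer multiples of $\beta'$ really closes the language under the endpoint comparisons produced by elimination, handle the degenerate single-point intervals $(a+q\beta')/m$, and then justify pushing the resulting description back down through the covering map. None of this is carried out, so as written the second half does not constitute a proof. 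More importantly, all of it is unnecessary: the second assertion follows from the first by a two-line counting argument, which is what the paper does. Any reduct of $(\zz;+,C_\alpha)^{\mathrm{Sh}}$ in a countable language defines only countably many subsets of $\zz$; since you have produced uncountably many externally definable sets $X_\gamma$, the reducts $(\zz;+,C_\alpha,X_\gamma)$ cannot fall into only countably many definable-equivalence classes, as their unary definable sets would then form a countable union of countable families yet must contain every $X_\gamma$. If your quantifier elimination were actually carried out, your argument would yield a strictly stronger and independently interesting conclusion (an explicit continuum of pairwise definably distinct reducts, each properly expanding $(\zz;+,C_\alpha)$ and with its unary definable sets completely described), but that is a separate project, not a proof of the stated proposition.
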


\begin{proof}
If $\mathscr{M},\mathscr{N}$ are as above, and $\mathscr{M}$ expands a linear or cyclic order then it is easy to see that any convex subset of $M$ is of the form $I \cap M$ for an interval $I \subseteq N$.
It follows that $(\zz;+,C_\alpha)^\mathrm{Sh}$ defines every $C_\alpha$-convex subset of $\zz$ and thus defines uncountably many subsets of $\zz$.
Any reduct of $(\zz;+,C_\alpha)^\mathrm{Sh}$ to a countable language defines only countably many subsets of $\zz$, it follows that $(\zz;+,C_\alpha)^\mathrm{Sh}$ has uncountably many definably distinct reducts.
\end{proof}

\section{Further questions}
\noindent There are several facts we  would like to know about cyclic group orders on $(\zz; +)$. If $\alpha$ and $\beta$ are linearly independent elements of $\rr \setminus \qq$, is $(\mathbb{Z};+,C_\alpha, C_\beta)$ $\text{NIP}$? Is there any reduct of $(\mathbb{Z};+,C_\alpha)$ with $\alpha \in \rr\setminus \qq$ which is definably distinct from $(\mathbb{Z};+,C_{\alpha})$ and $(\zz;+)$?
Is there a $p$-adic analogue of $(\zz;+, C_\alpha)$?
Finally, may the classification question for dp-minimal expansions $(\mathbb{Z};+)$ be recovered in any form?

\section*{Acknowledgements}
\noindent The authors thank the organizers of midwest model theory day, at which this research was begun.
They also thank Nigel Pynn-Coates for providing a good work environment on the drive back.
The authors acknowledge support from NSF grant DMS-1654725.

\bibliographystyle{amsalpha}
\bibliography{the}

\end{document}